\colorlet{mdtRed}{red!50!black}
\definecolor{dblue}{rgb}{0,0,.6}
\newtcolorbox{mymathbox}[1][]{colback=white, sharp corners, #1}
\newtheorem{theorem}[equation]{Theorem}
\newtheorem{lemma}[equation]{Lemma}
\newtheorem{definition}[equation]{Definition}
\newtheorem*{theorem*}{Theorem}
\theoremstyle{remark}
\newtheorem{remark}[equation]{\bf Remark}
\newcommand{\Z}{\mathbb{Z}}
\newcommand{\C}{\mathbb{C}}
\renewcommand{\P}{\mathbb{P}}
\renewcommand{\O}{\mathcal{O}}
\newcommand{\mf}[1]{\mathfrak{#1}}
\newcommand{\mb}[1]{\mathbb{#1}}
\newcommand{\mc}[1]{\mathcal{#1}}
\newcommand{\hilb}{\mathfrak{h}ilb}
\newcommand{\I}{\mathscr{I}}
\newcommand{\ii}{\mathcal{I}}
\newcommand{\T}{\mathcal{T}}
\numberwithin{equation}{section}
\renewcommand \subsection[1]{
	\refstepcounter{equation}
	\refstepcounter{subsection}
	\noindent {\bf \arabic{section}.\arabic{subsection}.}{\bf #1}.
}
\begin{document}

\title[Irreducibility of Some Nested Hilbert Schemes]
{Irreducibility of Some Nested Hilbert Schemes}

\author[C. Gangopadhyay]{Chandranandan Gangopadhyay} 

\address{Department of Mathematics, Indian Institute of Technology Bombay, Powai, Mumbai 
	400076, Maharashtra, India}

\email{chandra@math.iitb.ac.in} 

\author[P. Rasul]{Parvez Rasul} 

\address{Department of Mathematics, Indian Institute of Technology Bombay, Powai, Mumbai 
	400076, Maharashtra, India}

\email{rasulparvez@gmail.com} 

\author[R. Sebastian]{Ronnie Sebastian} 

\address{Department of Mathematics, Indian Institute of Technology Bombay, Powai, Mumbai 
	400076, Maharashtra, India}

\email{ronnie@math.iitb.ac.in} 

\subjclass[2020]{14C05}

\keywords{Nested Hilbert Scheme, Irreducibility}

\begin{abstract}
	Let $S$ be a smooth projective surface over $\mb C$.
	Let $S^{[n_1,\dots,n_k]}$ denote the nested Hilbert scheme
	which parametrizes zero-dimensional subschemes 
	$\xi_{n_1} \subset \ldots \subset \xi_{n_k}$ 
	where $\xi_i$ is a closed subscheme of $S$ of length $i$.
	We show that $S^{[n,m]}$, $S^{[n,m,m+1]}$, $S^{[n,n+1,m]}$,
	$S^{[n,n+1,m,m+1]}$, $S^{[n,n+2,m]}$ and $S^{[n,n+2,m,m+1]}$
	are irreducible.
\end{abstract}

\maketitle
\section{Introduction}
Let $S$ be a smooth projective surface over $\mb C$.
The Hilbert scheme $S^{[n]}$ 
which parametrizes closed zero-dimensional subschemes of $S$ of length $n$  
is a well studied space.
It was shown by Fogarty in \cite[Theorem 2.4]{fo68} that the Hilbert scheme $S^{[n]}$
is a smooth projective variety of dimension $2n$.
A natural generalization of $S^{[n]}$ is the nested Hilbert 
Scheme, about which far less is known.
For an increasing tuple of positive integers $n_1< \ldots <n_k$,
the nested Hilbert scheme $S^{[n_1,\dots,n_k]}$ parametrizes nested zero-dimensional subschemes 
$\xi_{n_1} \subset \ldots \subset \xi_{n_k}$ 
where $\xi_i$ is a subscheme of $S$ of length $i$.
In recent years the nested Hilbert schemes $S^{[n,m]}$
have received growing attention. They have been studied
by several authors using techniques from commutative algebra,
representation theory and Lie algebras. In a recent 
article, \cite{RS21}, Ramkumar and Sammartano introduce methods to study
$S^{[n,m]}$. They use these methods to show that the scheme $S^{[2,n]}$ 
is smooth in codimension 3 and has rational singularities. 
In particular, $S^{[2,n]}$ is normal and Cohen-Macaulay.
They also mention several interesting questions related
to the schemes $S^{[n,m]}$, one of them being the irreducibility
of these schemes. 
The purpose of this article is to show that $S^{[n,m]}$
is irreducible. 

Before we state our results, we mention 
a few already existing results related to irreducibility
of nested Hilbert schemes.
The nested Hilbert scheme $S^{[1,n]}$ is irreducible 
of dimension $2n$ by \cite[Corollary 7.3]{Fo}.
The scheme $S^{[n,n+1]}$
is smooth and irreducible,
as shown in \cite[Theorem 3.0.1]{Cheah98}.
In \cite[Proposition 6]{GH04}, the authors show 
that $S^{[n,n+2]}$ is irreducible of dimension $2n+4$.
In \cite{Bulois-Evain}, Bulois and Evain 
studied irreducible components of nested Hilbert schemes
supported at a single point using
the connection between nested Hilbert schemes 
and commuting varieties of parabolic subalgebras.
In \cite[\S3.A]{Addington} the irreducibility of 
$S^{[n,n+1,n+2]}$ is proved. 
In \cite{RT22}, Ryan and Taylor
study the irreducibility, singularities and Picard groups of $S^{[n,n+1,n+2]}$.
In \cite[Theorem 3.1]{RS21}, Ramkumar and Sammartano have shown 
that $S^{[2,n]}$ is irreducible of dimension $2n$.

The following two results limit the collection of 
tuples $(n_1,\dots,n_k)$ for which the 
nested Hilbert scheme $S^{[n_1,\dots,n_k]}$ is irreducible.
By \cite[Corollary 3.17]{RT22} the nested Hilbert scheme
$S^{[n_1,\dots,n_k]}$ is reducible for $k>22$.
In \cite[Proposition 3.7]{RS21} the authors prove the existence of 
tuples $n_1 < \cdots < n_k$, for each $k \geqslant 5$, 
such that the nested Hilbert scheme 
$(\mb A^2)^{[n_1,\dots,n_k]}$ is reducible.
We refer the reader to 
\cite{RT22}, \cite{RS21} and the references therein
for more results related to the geometry of nested Hilbert schemes.

In \cite{RS21}, the authors pose the problem of irreducibility
of the two step nested Hilbert schemes, see \cite[Question 9.4]{RS21}.
Our goal in this paper is to prove the following results on 
irreducibility of nested Hilbert schemes.

\begin{theorem*}[Theorem \ref{main-theorem}]
	Let $n$ and $m$ be two positive integers such that $n < m$. 
	Then $S^{[n,m,m+1]}$ and $S^{[n,m]}$ are irreducible. 
\end{theorem*}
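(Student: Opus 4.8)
\medskip
\noindent\textbf{Proof proposal.} The plan is to handle the two assertions in turn, reducing the three‑step scheme $S^{[n,m,m+1]}$ to the two‑step scheme $S^{[n,m]}$, and to attack $S^{[n,m]}$ through the forgetful morphism $\rho\colon S^{[n,m]}\to S^{[m]}$ sending $(\xi_n\subseteq\xi_m)$ to $\xi_m$. This map is projective and surjective, and by Fogarty the target $S^{[m]}$ is irreducible (indeed smooth) of dimension $2m$, with the locus $U\subseteq S^{[m]}$ of $m$ distinct points dense and open. The fibre of $\rho$ over $[\xi_m]$ is the Hilbert scheme $\xi_m^{[n]}$ of length‑$n$ subschemes of $\xi_m$; over $U$ it is the finite set of the $\binom{m}{n}$ sub‑configurations, so $\rho^{-1}(U)$ is the finite \'etale cover of $U$ obtained by marking $n$ of the $m$ points. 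As this cover is the quotient of the (connected) ordered configuration space $\{(p_1,\dots,p_m)\in S^m:\ p_i\neq p_j\}$ by $\Sigma_n\times\Sigma_{m-n}$, it is irreducible of dimension $2m$. Hence irreducibility of $S^{[n,m]}$ is equivalent to $\rho^{-1}(U)$ being dense, i.e.\ $\overline{\rho^{-1}(U)}=S^{[n,m]}$.

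Proving this density has two parts. The first is a dimension estimate over the non‑reduced locus $D=S^{[m]}\setminus U$: stratify by the local algebra type of $\xi_m$. If $\xi_m$ is non‑reduced but all its fat points are curvilinear then $\xi_m^{[n]}$ is finite, contributing a locus of dimension $\le\dim D=2m-1$; if $\xi_m$ has a non‑curvilinear fat point of length $\ell\ge 3$, then the locus of such $\xi_m$ has dimension $\le 2m-\ell$ while $\dim\xi_m^{[n]}\le\ell-2$ (since a Hilbert scheme of $n'<\ell$ points on a length‑$\ell$ fat point embeds in the $(n'{-}1)$‑dimensional punctual Hilbert scheme of $\A^2$), so the corresponding piece of $\rho^{-1}(D)$ has dimension $\le 2m-2$. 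In all cases $\dim\rho^{-1}(D)\le 2m-1$, so $\overline{\rho^{-1}(U)}$ is the unique component of $S^{[n,m]}$ of dimension $2m$. The second, and harder, part is to exclude any \emph{extra} irreducible component of $S^{[n,m]}$ of dimension $<2m$ — equivalently, to show every local ring of $S^{[n,m]}$ has dimension at least $2m$, equivalently that every point $(\xi_n\subseteq\xi_m)$ lies in $\overline{\rho^{-1}(U)}$. For this I would argue directly: decomposing $\xi_m$ into its connected components reduces (via the product decomposition of $S^{[n,m]}$ near a subscheme with several support points into smaller nested Hilbert schemes) to the case $\xi_m$ connected, and then one wants to degenerate $(\xi_n\subseteq\xi_m)$, inside $S^{[n,m]}$, to a nested pair whose second member has strictly more support points — for instance by deforming $\xi_m$ while holding a fixed subscheme $\xi_n$ of it, e.g.\ inserting a curvilinear intermediate $\xi_n\subseteq\xi_{m-1}\subseteq\xi_m$ and sliding a point off the support of $\xi_{m-1}$ — and then to induct on the length $m$. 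Verifying that such a degeneration always exists (this is false for arbitrary nested tuples, which is exactly why the statement is special to these shapes, and is where the known irreducibility of $S^{[n,n+1]}$, $S^{[n,n+2]}$ for the variants in the abstract would enter) is the main obstacle.

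For $S^{[n,m,m+1]}$ I would use the identification $S^{[n,m,m+1]}\cong S^{[n,m]}\times_{S^{[m]}}S^{[m,m+1]}$, the fibre product taken along the two maps recording $\xi_m$; here $S^{[m,m+1]}$ is smooth and irreducible by \cite[Theorem 3.0.1]{Cheah98}. The projection $q\colon S^{[n,m,m+1]}\to S^{[n,m]}$ forgetting $\xi_{m+1}$ has, over $(\xi_n\subseteq\xi_m)$, the fibre $\{\xi_{m+1}:\xi_m\subseteq\xi_{m+1}\}\cong\P\bigl((I_{\xi_m}/\mathfrak m_p I_{\xi_m})^{\vee}\bigr)$ as $p$ ranges over $S$, which is a single reduced point whenever $p\notin\supp\xi_m$; thus $q$ is birational and projective. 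Granting irreducibility of $S^{[n,m]}$ from the first part, $\overline{q^{-1}(\{p\notin\supp\xi_m\})}$ is an irreducible component of $S^{[n,m,m+1]}$ of dimension $2m+2$, and the same kind of bookkeeping as before — now estimating $\dim q^{-1}$ over the locus where $I_{\xi_m}$ requires at least three generators — rules out any further component, so $S^{[n,m,m+1]}$ is irreducible. In both parts the only genuinely delicate point is the dimension control excluding spurious components; everything else is formal.
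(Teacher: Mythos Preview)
Your approach diverges from the paper's, and there is a genuine gap exactly where you flag ``the main obstacle.'' The dimension estimates over the non-reduced locus of $S^{[m]}$ give only \emph{upper} bounds on strata of $S^{[n,m]}$; they cannot by themselves exclude lower-dimensional components. You propose to close this by an explicit degeneration (reduce to $\xi_m$ connected, then slide a point off), but you do not carry it out, and for an arbitrary punctual pair $(\xi_n\subset\xi_m)$ with $\xi_m$ non-curvilinear it is not clear such a deformation exists --- this is precisely the kind of statement about punctual nested Hilbert schemes that is unavailable. So the proof of irreducibility of $S^{[n,m]}$ is incomplete as it stands.

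The paper sidesteps this by reversing the order of the two assertions and running an induction on $m$: assuming $S^{[n,m]}$ irreducible (and all smaller $S^{[n-l',m-l]}$), it proves $S^{[n,m,m+1]}$ irreducible, and then surjects onto $S^{[n,m+1]}$. The key device, borrowed from Ellingsrud--Lehn, is a two-term locally free resolution $0\to\mc F\to\mc E\to\tilde{\I}_m\to 0$ on $S\times S^{[n,m]}$ with $\rk\mc F=r$, $\rk\mc E=r+1$. Then $\mb P(\tilde{\I}_m)\cong S^{[n,m,m+1]}$ sits inside the irreducible bundle $\mb P(\mc E)$ as the zero locus of $r$ equations, so \emph{every} irreducible component of $S^{[n,m,m+1]}$ has dimension at least $2m+2$. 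This is exactly the lower bound your argument lacks. Combined with the upper bounds of Lemma~\ref{dim W_i} (which uses the inductive hypothesis on the smaller nested schemes), every component must meet the open irreducible locus where $\tilde{\I}_m$ is locally free of rank~$1$, and irreducibility follows. No explicit degeneration is ever constructed.

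Two smaller points. First, your fibre bound $\dim\xi_m^{[n]}\le\ell-2$ is asserted only for a single fat point and is not justified when $\xi_m$ has several fat components; the stratification needs more care. Second, the map $q\colon S^{[n,m,m+1]}\to S^{[n,m]}$ forgetting $\xi_{m+1}$ is not birational: its generic fibre has dimension~$2$ (one must also choose the residual point $p\in S$). What is birational onto its image is the map $S^{[n,m,m+1]}\to S\times S^{[n,m]}$ recording $p$ as well --- and this is precisely the structure map $\varphi$ of $\mb P(\tilde{\I}_m)$ that the paper exploits.
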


\begin{theorem*}[Theorem \ref{irreducible n,n+1,m}]
	Let $n$ and $m$ be two positive integers such that $n+1 < m$.
	Then $S^{[n,n+1,m,m+1]}$ and $S^{[n,n+1,m]}$ are irreducible.
\end{theorem*}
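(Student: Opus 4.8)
The plan is to deduce both statements from Theorem~\ref{main-theorem} by studying the forgetful morphism that drops the smallest member $\xi_n$ of the flag. Write $X=S^{[n,n+1,m]}$ and let $\pi\colon X\to S^{[n+1,m]}$ be the map $(\xi_n\subset\xi_{n+1}\subset\xi_m)\mapsto(\xi_{n+1}\subset\xi_m)$; since $n+1<m$, the target is irreducible of dimension $2m$ by Theorem~\ref{main-theorem}. The fibre of $\pi$ over $(\xi_{n+1}\subset\xi_m)$ is the set of colength-one subschemes $\xi_n\subset\xi_{n+1}$, which equals $\bigsqcup_{p\in\supp(\xi_{n+1})}\P\bigl(\soc(\O_{\xi_{n+1},p})\bigr)$. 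This is neither connected nor equidimensional, so $\pi$ alone does not yield irreducibility; but each piece $\P\bigl(\soc(\O_{\xi_{n+1},p})\bigr)$ is irreducible, and the pieces are separated by the point $\supp(\xi_{n+1}/\xi_n)\in S$.

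So I would pass to the universal length-$(n+1)$ subscheme $\mc Z\subset S^{[n+1,m]}\times S$. It is finite and flat of degree $n+1$ over $S^{[n+1,m]}$, hence of pure dimension $2m$ with every component surjecting onto $S^{[n+1,m]}$; over the dense locus where $\xi_m$ is reduced it is smooth, and connected by the full symmetric-group monodromy of the associated $(n+1)$-sheeted cover, so $\mc Z$ is irreducible of dimension $2m$. The assignment $(\xi_n\subset\xi_{n+1}\subset\xi_m)\mapsto\bigl((\xi_{n+1}\subset\xi_m),\supp(\xi_{n+1}/\xi_n)\bigr)$ gives a proper morphism $\tilde\pi\colon X\to\mc Z$ whose fibre over $\bigl((\xi_{n+1}\subset\xi_m),p\bigr)$ is the irreducible projective space $\P\bigl(\soc(\O_{\xi_{n+1},p})\bigr)$ and which is an isomorphism over the dense open locus where $\xi_m$ is reduced; more generally, over the locus where every $\O_{\xi_{n+1},p}$ is Gorenstein the fibres are single points and $\tilde\pi$ is finite and birational. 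Hence $X$ has a distinguished irreducible component $X_0:=\overline{\tilde\pi^{-1}(\{\xi_m\text{ reduced}\})}$ of dimension $2m$, with $X_0\supseteq\tilde\pi^{-1}(\text{Gorenstein locus})$, and it remains to prove $X=X_0$, i.e.\ that every point of $\tilde\pi^{-1}$ of the non-Gorenstein locus already lies in $X_0$. (A dimension count, using that the minimal non-Gorenstein configuration is a fat point $V(\m_p^2)$ and that the punctual Hilbert scheme is irreducible, shows at least that this preimage has dimension $<2m$.) For $S^{[n,n+1,m,m+1]}$ one argues identically with $S^{[n+1,m,m+1]}$ in place of $S^{[n+1,m]}$: this is of the form $S^{[a,b,b+1]}$ with $a=n+1<b=m$, hence irreducible by Theorem~\ref{main-theorem}, and the fibres of the analogous $\pi$ and $\tilde\pi$ are again the projective spaces $\P\bigl(\soc(\O_{\xi_{n+1},p})\bigr)$. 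Alternatively, once $S^{[n,n+1,m]}$ is known irreducible, $S^{[n,n+1,m,m+1]}$ follows by realising the morphism forgetting $\xi_{m+1}$ as the blow-up of $S^{[n,n+1,m]}\times S$ along $\mc Z_m$, just as $S^{[n,n+1]}=\mathrm{Bl}_{\mc Z_n}(S^{[n]}\times S)$.

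The hard part is the last step: showing that $\tilde\pi^{-1}$ of the non-Gorenstein locus lies in $X_0$, equivalently that every flag $(\xi_n\subset\xi_{n+1}\subset\xi_m)$ with some non-Gorenstein $\O_{\xi_{n+1},p}$ is a limit of flags with $\xi_m$ reduced. The idea is a degeneration: deform $(\xi_n\subset\xi_{n+1})$ inside the \emph{smooth} irreducible scheme $S^{[n,n+1]}$ to a pair with $\xi_{n+1}$ reduced and propagate the inclusion $\xi_{n+1}\subset\xi_m$ over the deforming base, using properness of the relative Hilbert scheme together with the irreducibility of $S^{[n+1,m]}$ (Theorem~\ref{main-theorem}). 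Carrying this out rigorously is delicate, since the relevant relative Hilbert scheme of $\xi_m$ is neither flat nor equidimensional, and it will probably require working with an explicit local model of the nested Hilbert scheme near the given flag. A more structural---and cleaner---route would be to identify $X$ itself as the blow-up of the irreducible scheme $\mc Z$ along a suitable ideal sheaf, which would make irreducibility automatic but again demands a local computation in the spirit of the known structure theory for $S^{[n,n+1]}$.
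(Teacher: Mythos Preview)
Your outline is honest about its own gap, and that gap is real: you never establish that the preimage under $\tilde\pi$ of the non-Gorenstein locus lies in the main component $X_0$. The parenthetical dimension count---even if carried out---would not close this gap, because you have no lower bound on the dimension of an arbitrary irreducible component of $X=S^{[n,n+1,m]}$. A proper surjection $\tilde\pi\colon X\to\mc Z$ with $\mc Z$ irreducible and every fibre an irreducible projective space does \emph{not} force $X$ irreducible: nothing prevents an extra component of $X$, of dimension strictly less than $2m$, sitting entirely over the non-Gorenstein stratum. Your two proposed fixes (deform inside $S^{[n,n+1]}$ and propagate $\xi_m$; or identify $X$ as a blow-up of $\mc Z$) are both plausible programmes but neither is executed, and the second is not obviously true---$\mb P$ of an ideal sheaf is ${\rm Proj}$ of its symmetric algebra, not its Rees algebra, and the two agree only under hypotheses you have not verified here. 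The analogous remark applies to your claim that $S^{[n,n+1,m,m+1]}$ is the blow-up of $S^{[n,n+1,m]}\times S$ along $\mc Z_m$.

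The paper takes a different route that supplies exactly the missing lower bound. It argues by induction on $m$ (base case $S^{[n,n+1,n+2]}$, due to Ryan--Taylor) and, at the inductive step, realises $S^{[n,n+1,m,m+1]}$ as $\mb P(\tilde\I_m)$ over $S\times S^{[n,n+1,m]}$. The point is that $\mb P(\tilde\I_m)$ sits inside a projective bundle $\mb P(\mc E)$ as the zero locus of a bundle map $\pi^*\mc F\to\mc O(1)$ with ${\rm rank}(\mc F)=r$; hence every irreducible component of $\mb P(\tilde\I_m)$ has dimension at least $\dim\mb P(\mc E)-r=2m+2$. This is the crucial lower bound your argument lacks. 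A stratification estimate (Lemma~\ref{dimension W_i,n+1}, the three-step analogue of Lemma~\ref{dim W_i}) then shows that the locus over which the fibres of $\varphi$ jump has dimension at most $2m+1$ in $\mb P(\tilde\I_m)$, so no component can be contained there and all components meet the open set where $\varphi$ is an isomorphism. Irreducibility of $S^{[n,n+1,m,m+1]}\cong\mb P(\tilde\I_m)$ follows, and surjectivity of $\psi$ onto $S^{[n,n+1,m+1]}$ closes the induction. In short, the paper works at the \emph{top} of the flag (adding $\xi_{m+1}$) rather than the bottom (removing $\xi_n$), precisely because that description as a degeneracy locus gives a uniform lower bound on component dimensions.
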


\begin{theorem*}[Theorem \ref{irreducible n+2}]
	Let $n$ and $m$ be two positive integers such that $n+2 < m$.
	Then $S^{[n,n+2,m,m+1]}$ and $S^{[n,n+2,m]}$ are irreducible.
\end{theorem*}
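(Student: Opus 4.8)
The plan is to study $S^{[n,n+2,m]}$ (and, in parallel, $S^{[n,n+2,m,m+1]}$) through the forgetful morphism contracting the smallest member of the flag,
\[
\Phi\colon S^{[n,n+2,m]}\longrightarrow S^{[n+2,m]},\qquad
(\xi_n\subset\xi_{n+2}\subset\xi_m)\longmapsto(\xi_{n+2}\subset\xi_m),
\]
together with its analogue $S^{[n,n+2,m,m+1]}\to S^{[n+2,m,m+1]}$. Since $n+2<m$, Theorem \ref{main-theorem} applies and the targets are irreducible; as the locus on which the largest subscheme is reduced is a nonempty, hence dense, open subset, these targets have dimension $2m$ and $2m+2$. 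The fibre of $\Phi$ over $(\xi_{n+2}\subset\xi_m)$ is the Hilbert scheme $\mathrm{Hilb}^{n}(\xi_{n+2})$ of length-$n$ closed subschemes of the finite scheme $\xi_{n+2}$, equivalently the variety of length-$2$ $\O_S$-submodules of $\O_{\xi_{n+2}}$.

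The first step is to prove irreducibility over a suitable open set. Let $W\subset S^{[n+2,m]}$ be the open locus where $\xi_{n+2}$ is curvilinear, and $W_1\subset W$ the dense open locus where $\xi_m$ is reduced. When $\xi_{n+2}$ is curvilinear, each of its local factors is a quotient of $\C[[t]]$ and hence has a unique submodule of each length, so $\mathrm{Hilb}^{n}(\xi_{n+2})$ is a finite set; being proper and quasi-finite, $\Phi$ then restricts to a finite surjective morphism $\Phi^{-1}(W)\to W$. Over $W_1$ one has the explicit identification
\[
\Phi^{-1}(W_1)\;\cong\;\mathrm{Conf}_m(S)\,\big/\,\bigl(S_n\times S_2\times S_{m-n-2}\bigr),
\]
a free finite quotient of the ordered configuration space, hence smooth, connected, and irreducible of dimension $2m$. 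Since $\Phi^{-1}(W)\to W$ is finite surjective with $W$ irreducible, every component of $\Phi^{-1}(W)$ dominates $W$ and therefore meets $\Phi^{-1}(W_1)$; as the latter is irreducible of dimension $2m=\dim W$, each such component must contain $\Phi^{-1}(W_1)$, so $\Phi^{-1}(W)$ is irreducible of dimension $2m$. The same reasoning over $S^{[n+2,m,m+1]}$ yields an irreducible open subset of $S^{[n,n+2,m,m+1]}$ of dimension $2m+2$.

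The second step is to show that $\Phi^{-1}(W)$ is dense, i.e.\ that the closed locus $Z$ where $\xi_{n+2}$ is not curvilinear contains no component of $S^{[n,n+2,m]}$. Here one would first bound $\dim Z$ by stratifying according to the local isomorphism types of $\xi_{n+2}$: if a local ring of $\xi_{n+2}$ surjects onto $\C[[x,y]]/\m^{k}$ with $k\ge 2$, the corresponding stratum in $S^{[n+2]}$, hence in $S^{[n+2,m]}$, has codimension growing quadratically in $k$, whereas on such a stratum $\mathrm{Hilb}^{n}(\xi_{n+2})$ --- being assembled from Grassmannians of subspaces of socles --- has dimension growing only linearly in $k$; combining these with the bound $2(m-n-2)$ for the fibre dimension of $S^{[n+2,m]}\to S^{[n+2]}$ gives $\dim Z<2m$. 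To upgrade this to the nonexistence of components inside $Z$, one shows every point of $Z$ lies in $\overline{\Phi^{-1}(W)}$: given $(\xi_n\subset\xi_{n+2}\subset\xi_m)$, deform $(\xi_n\subset\xi_{n+2})$ inside $S^{[n,n+2]}$ --- irreducible with dense curvilinear locus by \cite[Proposition 6]{GH04} --- to a pair with $\xi_{n+2}$ curvilinear, and then propagate this deformation to $\xi_m$ using properness of the relevant relative Hilbert scheme over the base curve. Steps one and two together give $\overline{\Phi^{-1}(W)}=S^{[n,n+2,m]}$, so $S^{[n,n+2,m]}$ is irreducible of dimension $2m$; invoking Theorem \ref{main-theorem} for $S^{[n+2,m,m+1]}$ in the parallel argument gives $S^{[n,n+2,m,m+1]}$ irreducible of dimension $2m+2$.

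The main obstacle will be the second step: making the stratification of $Z$ precise with dimension bounds valid on every stratum, and --- more delicately --- realising the simultaneous deformation of the entire flag $\xi_n\subset\xi_{n+2}\subset\xi_m$ into the curvilinear locus, which is what rules out spurious lower-dimensional components. This is exactly the sort of analysis the methods of \cite{RS21} are designed for. It is also where the hypotheses enter: the smoothing step consumes the irreducibility of $S^{[n,n+2]}$ (\cite[Proposition 6]{GH04}, together with that of $S^{[n,n+1]}$ from \cite[Theorem 3.0.1]{Cheah98} in the companion statements), and no analogue is available for larger jumps, which is why only the jumps $1$ and $2$ are treated.
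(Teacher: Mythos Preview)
Your route---forgetting $\xi_n$ and fibring over $S^{[n+2,m]}$---is genuinely different from the paper's, but both steps have real gaps. In Step~1 the assertion that every component of $\Phi^{-1}(W)$ dominates $W$ is not a consequence of $\Phi$ being finite surjective: the fibre cardinality $\#\mathrm{Hilb}^n(\xi_{n+2})$ jumps (it is $\binom{n+2}{2}$ for $\xi_{n+2}$ reduced and $1$ for $\xi_{n+2}$ connected curvilinear), so $\Phi\vert_{\Phi^{-1}(W)}$ is not flat and nothing prevents a component lying over a proper closed stratum. The decisive gap is Step~2. Properness of a relative Hilbert scheme supplies \emph{specialisation}, not generalisation: given a deformation of $(\xi_n\subset\xi_{n+2})$ over a curve, there is no mechanism to ``propagate'' the fixed $\xi_m$ to superschemes of the deformed $\xi_{n+2}(t)$; the fibre of $S^{[n,n+2,m]}\to S^{[n,n+2]}$ is uncontrolled, and this lifting is essentially the irreducibility statement itself. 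Likewise the bound $\dim Z<2m$ is inert without a lower bound on the dimension of every component of $S^{[n,n+2,m]}$, which you have not produced and which is not obvious for nested Hilbert schemes.

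The paper avoids these issues by inducting on $m$ and working instead with $\mb P(\tilde\I_m)$ over $S\times S^{[n,n+2,m]}$, which it identifies with $S^{[n,n+2,m,m+1]}$. Resolving $\tilde\I_m$ by locally free sheaves of ranks $r$ and $r+1$ exhibits $\mb P(\tilde\I_m)$ as cut out by $r$ equations inside an irreducible $\mb P^r$-bundle, so \emph{every} component has dimension at least $2m+2$; Lemma~\ref{dimension W_i,n+2} (using the Bulois--Evain bounds on $S_p^{[l'',l']}$ for $l'-l''\leqslant 2$) then shows the locus where $\ii_{\xi_m}$ needs $\geqslant 2$ generators at $p$ has dimension at most $2m+1$, forcing every component to meet the open set where $\varphi$ is an isomorphism. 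The lower bound on component dimension, obtained for free from the projective-bundle description, is precisely what substitutes for the deformation argument you are missing.
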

Let $E$ be a locally free sheaf on $S$ 
and let ${\rm Quot}(E,d)$ denote the Grothendieck Quot scheme 
of quotients of $E$ of length $d$. In \cite[Theorem 1]{EL99} it is proved 
that this Quot scheme is irreducible. 
The proofs of the above results proceed by combining some of the 
ideas in \cite{EL99}, \cite{Bulois-Evain} and \cite{RT22},
and using an induction argument. We assume that 
$S^{[n,m]}$ is irreducible and show that $S^{[n,m,m+1]}$ is irreducible.
Using the surjectivity of the natural map $S^{[n,m,m+1]}\to S^{[n,m+1]}$
we see that $S^{[n,m+1]}$ is irreducible. 

A crucial input in all the 
proofs is that the dimension of some of the spaces of the type $S_p^{[l',l]}$ 
(this notation is explained before Lemma \ref{dim W_i}) satisfy a certain upper bound. 
These dimensions have been computed in \cite{Bulois-Evain}
when $0\leqslant l-l'\leqslant 2$. It is natural to ask if the methods in this 
article can be used to proved the irreduciblity of $S^{[n_1,n_2,n_3]}$ for all triples. 
One of the obstacles is the non-existence of similar bounds on the dimension of $S_p^{[l',l]}$  
for all pairs $(l',l)$ with $l-l'\geqslant0$.\\

\noindent
{\bf Acknowledgments}. We thank the referee for a very 
careful reading of the article and for 
numerous suggestions which helped in improving the exposition.
We thank the organizers of the Virtual Commutative Algebra Seminar 
at IIT Bombay, from where we came to know of this question. 

\section{Preliminaries}\label{section preliminaries}
Let $S$ be a smooth projective surface over $\mb C$.
For a pair of positive integers $n,m$ with $n<m$,
the nested Hilbert scheme $S^{[n,m]}$ parametrizes nested subschemes
$\xi_n \subset \xi_m$ of $S$, where $\xi_i$ is a finite scheme of length $i$.
Recall that the scheme $S^{[n,m]}$ represents the functor of nested flat families 
$\hilb^{[n,m]}_S $ 
$$\hilb^{[n,m]}_S : {\rm Sch}/\C \longrightarrow {\rm Sets}\,,$$
where $\hilb^{[n,m]}_S(T)$ is the set of isomorphism
classes of $T$-flat subschemes $X_n \subset X_m \subset S \times T$
such that for each point $t\in T$, the length of the subscheme
$X_n\otimes k(t)$ is $n$ and the length of the subscheme 
$X_m\otimes k(t)$ is $m$. 
In particular, we have universal nested families of closed 
subschemes $Z_n \subset Z_m \subset S \times S^{[n,m]}$.
The closed points of $Z_n$ and $Z_m$ have the following descriptions:
\begin{align*}
	Z_n &= \{(p,\xi_n,\xi_m) \in S \times S^{[n,m]}\quad \vert 
	\quad p \in \xi_n \subset \xi_m \}\, ,\\
	Z_m &=\{(p,\xi_n,\xi_m) \in S \times S^{[n,m]}\quad \vert 
	\quad  p \in \xi_m \}\,.
\end{align*}

We have the projection map 
$$\pi_m:S^{[n,m]}\longrightarrow S^{[m]}\,.$$
Let $\I_m$ denote the ideal sheaf of the universal subscheme
inside $S\times S^{[m]}$. 
Consider the map 
$${\rm Id}_S\times \pi_m:S\times S^{[n,m]}\longrightarrow S\times S^{[m]}\,.$$
Denote the pullback 
$$\tilde \I_m:=({\rm Id}_S\times \pi_m)^*\I_m\,.$$ 
Consider the projective bundle 
\begin{equation}
	\varphi: \mb P(\tilde \I_m)\longrightarrow S\times S^{[n,m]}\,.
\end{equation}
On $\mb P(\tilde \I_m)$, we have the tautological quotient 
$$\varphi^*\tilde \I_m\longrightarrow \mc O_{\mb P(\tilde \I_m)}(1)\,.$$
Let $\varphi_1$ denote the composite 
$\mb P(\tilde \I_m)\stackrel{\varphi}{\longrightarrow} S\times S^{[n,m]}\longrightarrow S$,
where the second map is the projection to $S$.
Similarly, let $\varphi_2$ denote the composite 
$\mb P(\tilde \I_m)\stackrel{\varphi}{\longrightarrow} S\times S^{[n,m]}\longrightarrow S^{[n,m]}$,
where the second map is the projection to $S^{[n,m]}$.
Consider the graph of $\varphi_1$, 
$$\mb P(\tilde \I_m)\stackrel{\iota}{\hookrightarrow}S\times \mb P(\tilde \I_m)\,.$$
Since $\iota$ is the graph of $\varphi_1$, it follows 
that the composite map 
$\mb P(\tilde \I_m)\stackrel{\iota}{\hookrightarrow}S\times \mb P(\tilde \I_m)\to \mb P(\tilde \I_m)$
is the identity. 
This shows that the sheaf $\iota_*\mc O_{\mb P(\tilde \I_m)}(1)$ 
on $S\times \mb P(\tilde \I_m)$ is flat over $\mb P(\tilde \I_m)$.

Now consider the map 
$$({\rm Id}_S\times \varphi_2):S\times \mb P(\tilde \I_m)\longrightarrow S\times S^{[n,m]}\,.$$
On $S \times \mb P(\tilde \I_m)$, there is a canonical surjection 
$$ \delta : ({\rm Id}_S \times \varphi_2)^*\tilde \I_m
\longrightarrow \iota_*\iota^*({\rm Id}_S \times \varphi_2)^*\tilde \I_m =\iota_*\varphi^*\tilde\I_m
\longrightarrow \iota_*\mc O_{\mb P(\tilde \I_m)}(1)\,.$$
Using $\delta$ we define a sheaf $\T$ on $S\times \mb P(\tilde \I_m)$ by the push-out diagram below
\begin{equation}\label{universal push out}
\begin{tikzcd}
	0 \ar[r]{}  &({\rm Id}_S \times \varphi_2)^* \I_m \ar[r] \arrow{d}{\delta} & 
	\O_{S \times \mb P(\tilde \I_m)} \ar[r]{} \ar[d]{} & 
	({\rm Id}_S \times \varphi_2)^*\O_{Z_m} \ar[d,-,double equal sign distance,double] \ar[r]{} &0\\
	0 \ar[r]  & \iota_*\mc O_{\mb P(\tilde \I_m)}(1)\ar[r] & 
		\T \ar[r]{} &({\rm Id}_S \times \varphi_2)^*\O_{Z_m} \ar[r]{} &0\,.
\end{tikzcd}
\end{equation}

\begin{remark}\label{pullback flat}
	Recall the following general fact. 
	Let $X\to Y$ be a map of schemes and let $\mc F$ be a quasi-coherent
	sheaf on $X$ which is flat over $Y$. Let $f:Y'\to Y$ be a morphism
	of schemes and consider the Cartesian square 
	\[\xymatrix{
		X'\ar[r]^{\tilde f}\ar[d]& X\ar[d]\\
		Y'\ar[r]^f& Y
	}
	\]
	Then one easily checks that the sheaf $\tilde f^*\mc F$ 
	is flat over $Y'$. \hfill\qedsymbol
\end{remark}
Applying Remark \ref{pullback flat}  to the diagram 
\[\xymatrix{
	S\times \mb P(\tilde \I_m)\ar[rr]^{{\rm Id}_S\times \varphi_2}\ar[d]&& S\times S^{[n,m]}\ar[d]\\
	\mb P(\tilde \I_m)\ar[rr]^{\varphi_2}&& S^{[n,m]}
}
\]
and the sheaf $\mc O_{Z_m}$ on $S\times S^{[n,m]}$
we see that $({\rm Id}_S \times \varphi_2)^*\O_{Z_m}$ 
is flat over $\mb P(\tilde \I_m)$. We already saw that 
$\iota_*\mc O_{\mb P(\tilde \I_m)}(1)$ 
is flat over $\mb P(\tilde \I_m)$.
Thus, it follows that the sheaf $\mc T$ on $S\times \mb P(\tilde \I_m)$ 
is flat over $\mb P(\tilde \I_m)$.
It is clear that $\T$ is a family of quotients of length $m+1$. 
This gives a nested family of quotients 
$$\O_{S \times \mb P(\tilde \I_m)} \longrightarrow \T \longrightarrow ({\rm Id}_S \times \varphi_2)^*\O_{Z_m}\longrightarrow 
	({\rm Id}_S \times \varphi_2)^*\O_{Z_n}$$ 
on $S \times \mb P(\tilde \I_m)$. Using the universal property 
for $S^{[n,m+1]}$ and the quotients 
$$\O_{S \times \mb P(\tilde \I_m)} \longrightarrow \T \longrightarrow ({\rm Id}_S \times \varphi_1)^*\O_{Z_n}\,$$
we get a map 
\begin{equation}\label{def psi}
	\psi: \mb P(\tilde \I_m) \longrightarrow S^{[n,m+1]}\,.	
\end{equation}
A pointwise description of this map is given as follows. 
Let $(p,\xi_n,\xi_m) \in S \times S^{[n,m]}$ be a closed point.
So we have a short exact sequence 
$$0 \longrightarrow \ii_{\xi_m} \longrightarrow \O_S \longrightarrow \O_{\xi_m} \longrightarrow 0\,.$$
A point in $\mb P(\tilde \I_m)$ over $(p,\xi_n,\xi_m)$ is given by a quotient 
$\lambda : \ii_{\xi_m} \longrightarrow k(p)$.
We shall represent such a point by the tuple $(p,\xi_n,\xi_m,\lambda)$.
We get the quotient $\O_S \longrightarrow \mc O_{\xi_{m+1}}$ by the push-out diagram below
in which the columns are short exact sequences.
\begin{equation}\label{e4}
\begin{tikzcd}
	 &\ii_{\xi_{m+1}}  \ar[r,-,double equal sign distance,] \arrow{d} & \ii_{\xi_{m+1}}\ar[d]\\
	0 \ar[r]{}  &\ii_{\xi_m}  \ar[r] \arrow{d}{\lambda} & \O_S \ar[r]{} \ar[d]{} & 
	\O_{\xi_m} \ar[d,-,double equal sign distance,double] \ar[r]{} &0\\
	0 \ar[r]  &k(p) \ar[r] &  \mc O_{\xi_{m+1}}\ar[r]{} & \O_{\xi_m} \ar[r]{} &0	
\end{tikzcd}
\end{equation}
The map $\psi$ takes the point 
$(p,\xi_n,\xi_m,\lambda)$ of $\P(\I_m)$ to the point $(p,\xi_n,\xi_{m+1})\in S\times S^{[n,m+1]}$.

We note the following maps
\begin{equation}\label{diagram phi psi}
	\xymatrix{
		\mb P(\tilde \I_m)\ar[r]^{\psi}\ar[d]_{\varphi} & S^{[n,m+1]}\\
		S\times S^{[n,m]}
	}
\end{equation}

For an $\mc O_S$ module $\mc F$, we shall denote by 
$\mc F_p$ the localization $\mc F\otimes_{\mc O_S} \mc O_{S,p}$.
Here $\mc O_{S,p}$ is the local ring of $S$ at the closed point $p$.

\begin{lemma}\label{surjective psi}
	The map $\psi$ is surjective on closed points. 
\end{lemma}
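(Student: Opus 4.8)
The plan is to verify surjectivity on closed points directly, reducing it to an elementary statement about finite-length modules. I would start from an arbitrary closed point $(\xi_n\subset\xi_{m+1})$ of $S^{[n,m+1]}$ and produce a preimage under $\psi$. By the pointwise description of $\psi$ furnished by the push-out diagram \eqref{e4}, it is enough to exhibit a length-$m$ subscheme $\xi_m$ with $\xi_n\subseteq\xi_m\subseteq\xi_{m+1}$ together with a surjection $\lambda\colon\ii_{\xi_m}\twoheadrightarrow k(p)$, for some closed point $p\in S$, whose kernel is $\ii_{\xi_{m+1}}$; the tuple $(p,\xi_n,\xi_m,\lambda)$ is then a closed point of $\mb P(\tilde\I_m)$ and, by \eqref{e4}, $\psi(p,\xi_n,\xi_m,\lambda)=(\xi_n,\xi_{m+1})$.

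The main step is the construction of $\xi_m$. I would look at the $\O_S$-module $M:=\ii_{\xi_n}/\ii_{\xi_{m+1}}$: it is nonzero of finite length $m+1-n\geqslant 1$, is annihilated by $\ii_{\xi_{m+1}}$, and hence is supported on $\xi_{m+1}$. A nonzero module of finite length has nonzero socle, so $M$ contains a submodule $N$ isomorphic to $k(p)$ for some closed point $p\in\xi_{m+1}$. Let $\ii_{\xi_m}\subseteq\ii_{\xi_n}$ be the preimage of $N$ under the projection $\ii_{\xi_n}\to M$. Then $\ii_{\xi_m}$ is an $\O_S$-submodule of $\O_S$, hence a coherent ideal sheaf; it satisfies $\ii_{\xi_{m+1}}\subseteq\ii_{\xi_m}\subseteq\ii_{\xi_n}$, and $\ii_{\xi_m}/\ii_{\xi_{m+1}}\cong N\cong k(p)$ has length $1$, so $\O_S/\ii_{\xi_m}$ has length $m$. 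Thus the subscheme $\xi_m$ defined by $\ii_{\xi_m}$ has length $m$ and $\xi_n\subseteq\xi_m\subseteq\xi_{m+1}$, so $(\xi_n,\xi_m)$ is a closed point of $S^{[n,m]}$.

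To conclude, I would take $\lambda$ to be the canonical quotient $\ii_{\xi_m}\twoheadrightarrow\ii_{\xi_m}/\ii_{\xi_{m+1}}=k(p)$; by the description of $\mb P(\tilde\I_m)$ recalled before the lemma this gives a closed point $(p,\xi_n,\xi_m,\lambda)$ lying over $(p,\xi_n,\xi_m)$. Unwinding the push-out diagram \eqref{e4}, the subscheme $\xi_{m+1}'$ with $\psi(p,\xi_n,\xi_m,\lambda)=(\xi_n,\xi_{m+1}')$ has ideal sheaf $\ker\lambda=\ii_{\xi_{m+1}}$, so $\xi_{m+1}'=\xi_{m+1}$ and $(\xi_n,\xi_{m+1})$ lies in the image of $\psi$. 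As $(\xi_n,\xi_{m+1})$ was arbitrary, $\psi$ is surjective on closed points. The only step requiring any thought is the construction of $\xi_m$, and that rests solely on the fact that a nonzero finite-length $\O_S$-module has a simple submodule; the remainder is a direct bookkeeping with \eqref{e4}.
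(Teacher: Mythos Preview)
Your proof is correct and follows essentially the same approach as the paper: both construct $\xi_m$ by locating a simple submodule (a socle element) and verify via \eqref{e4} that the resulting point maps to the given $(\xi_n,\xi_{m+1})$. The only cosmetic difference is that you work directly with $M=\ii_{\xi_n}/\ii_{\xi_{m+1}}$ and take a preimage in $\ii_{\xi_n}$, while the paper works with the isomorphic module $K=\ker(\mc O_{\xi_{m+1}}\to\mc O_{\xi_n})$ and then invokes the Snake Lemma to pass back to ideal sheaves; your version is slightly more direct but the content is identical.
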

\begin{proof}
	A closed point in $S^{[n,m+1]}$ corresponds to 
	subschemes $\xi_n\subset \xi_{m+1}$ with ${\rm length}(\xi_n)=n$
	and ${\rm length}(\xi_{m+1})=m+1$. Let $K$ denote the kernel
	of the map $\mc O_{\xi_{m+1}}\longrightarrow \mc O_{\xi_n}$. Then we may write 
	$$K=\bigoplus_{p\in {\rm Supp}(K)}K_p\,.$$
	Choose any map $k(p)\longrightarrow K_p$ of $\mc O_{S,p}$ modules and form the 
	diagram 
	\[
	\xymatrix{
		 & k(p)\ar@{=}[r]\ar[d] & k(p)\ar[d]^\lambda\\
		 0\ar[r] & K\ar[r] & \mc O_{\xi_{m+1}}\ar[r]\ar[d]^{\theta} & \mc O_{\xi_n} \ar[r]\ar@{=}[d] & 0\\
		 && \mc O_{\xi_{m}}\ar[r] & \mc O_{\xi_{n}} \ar[r] & 0
	}
	\]
	Note that the middle column is a short exact sequence. 
	Using this observation and applying Snake Lemma to the diagram
	\begin{equation}\label{e3}
	\xymatrix{
		0\ar[r] & \mc I_{\xi_{m+1}}\ar[r]\ar[d] & 
			\mc O_S\ar[r]\ar@{=}[d] & \mc O_{\xi_m+1} \ar[r]\ar[d]^{\theta} & 0\\
		0\ar[r]&\mc I_{\xi_{m}}\ar[r]& \mc O_{S}\ar[r] & \mc O_{\xi_{m}} \ar[r] & 0
	}
	\end{equation}
	one easily concludes that we have a short exact sequence 
	of ideal sheaves 
	$$0\longrightarrow \mc I_{\xi_{m+1}}\longrightarrow \mc I_{\xi_{m}}\stackrel{\lambda}{\longrightarrow} k(p)\longrightarrow 0\,.$$
	The reader will easily check that when we take the push-out of the lower row
	in \eqref{e3} along the map $\lambda$, we get diagram \eqref{e4}.
	One easily concludes that the closed point $(p,\xi_{n},\xi_{m},\lambda)\in \mb P(\tilde \I_m)$
	is mapped to the closed point $(\xi_n,\xi_{m+1})\in S^{[n,m+1]}$ under $\psi$.
	This completes the proof of the Lemma. 
\end{proof}

\section{Irreducibility of $S^{[n,m]}$}

Let $W_{i,[n,m]}$ denote the following locus in $S\times S^{[n,m]}$
$$ W_{i,[n,m]} := \{(p, \xi_n,\xi_m) \in S \times S^{[n,m]}\quad  \vert\quad 
{\rm dim}(\ii_{\xi_m} \otimes k(p))=i \quad \}\,.$$
In other words, it is the locus of points $(p,\xi_n,\xi_m)$
such that the ideal $\mc I_{\xi_m}$ is generated by exactly $i$
elements at the point $p$. Since $\xi_m$ is a zero dimensional scheme
on a smooth surface, it follows that if $p$ is in the support 
of $\xi_m$, then ${\rm dim}(\ii_{\xi_m} \otimes k(p))\geqslant 2$. 
In other words, $W_{1,[n,m]}$ is the complement of the 
universal family $Z_m$ in $S \times S^{[n,m]}$.
For $i\geqslant 2$ we define subsets $W_{i,[n,m],l',l}\subset W_{i,[n,m]}$
as follows. 

\begin{definition}
	Let $i\geqslant 2$. Let $W_{i,[n,m],l',l}\subset W_{i,[n,m]}$ be the subset 
	consisting of points $(p,\xi_n,\xi_m)$ such that 
	${\rm length}(\mc O_{\xi_{n},p})=l'$ and ${\rm length}(\mc O_{\xi_{m},p})=l$.
\end{definition}
Notice that for the set $W_{i,[n,m],l',l}$ to be nonempty we 
need that $0\leqslant l'\leqslant n$, $0\leqslant l'\leqslant l$ and 
$1\leqslant l\leqslant m$. 
As $i\geqslant 2$, we
have that $p\in {\rm Supp}(\xi_m)$, which implies that $1\leqslant l$. 
Note that $l'=0$ is allowed as it may happen 
that $p$ is not in the support of $\xi_n$. 

Clearly, 
\begin{equation}\label{strat W_i}
	W_{i,[n,m]}=\bigcup_{l',l}W_{i,[n,m],l',l}\,.
\end{equation}
In the next lemma, using the sets $W_{i,[n,m],l',l}$, we shall obtain a bound 
on the dimension of $W_{i,[n,m]}$. We need the following notations. 
Let $p\in S$ denote a closed point. 
\begin{itemize}
	\item By $S^{[0,m]}$ we mean $S^{[m]}$.
	\item Let $S^{[l]}_{p,i}$ denote the subset of $S^{[l]}$ corresponding
	to subschemes $\eta$ satisfying the following two conditions:
	${\rm Supp}(\eta)=\{p\}$ and ${\rm dim}(\ii_{\eta}\otimes k(p))=i$.
	\item Let 
	$S^{[l',l]}_{p,i}$
	denote the subset of $S^{[l',l]}$ consisting of pairs $(\xi_{l'},\xi_l)$
	satisfying the following two conditions:
	${\rm Supp}(\xi_l)=\{p\}$ and ${\rm dim}(\ii_{\xi_l}\otimes k(p))=i$.
	\item By $S^{[0,l]}_{p,i}$ we mean $S^{[l]}_{p,i}$.
\end{itemize}

\begin{lemma}\label{dim W_i}
	Fix integers $n<m$. Consider pairs of integers $(l',l)$ for which the following three conditions hold:
	\begin{itemize}
		\item $0\leqslant n-l'\leqslant m-l$, 
		\item $0\leqslant l'\leqslant l$,
		\item $1\leqslant l$.
	\end{itemize}
	Assume that for each such pair, the locus $S^{[n-l',m-l]}$ is irreducible of 
	dimension $2(m-l)$. Let $i\geqslant 2$. 
	Then ${\rm dim}(W_{i,[n,m]})\leqslant 2m+2-i$.
\end{lemma}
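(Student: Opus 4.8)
We want to bound $\dim W_{i,[n,m]}$, and by \eqref{strat W_i} it suffices to bound the dimension of each stratum $W_{i,[n,m],l',l}$ separately. Fix a pair $(l',l)$ satisfying the three conditions in the lemma (if no such pair exists, or more precisely if the stratum is empty, there is nothing to prove). The idea is to fibre $W_{i,[n,m],l',l}$ over the surface $S$ via the first projection $\pi : (p,\xi_n,\xi_m) \mapsto p$. Since $S$ is irreducible of dimension $2$, it is enough to show that for a fixed closed point $p\in S$ the fibre $\pi^{-1}(p) \cap W_{i,[n,m],l',l}$ has dimension at most $2m-i$; then $\dim W_{i,[n,m],l',l} \leqslant 2 + (2m - i) = 2m+2-i$, as desired. (One must be mildly careful that a bound on the dimension of every fibre of a morphism to an irreducible base bounds the total dimension by $\dim(\text{base}) + \max\dim(\text{fibre})$; this is standard.)

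**Describing the fibre.** A point of $\pi^{-1}(p) \cap W_{i,[n,m],l',l}$ is a nested pair $\xi_n\subset \xi_m$ with $p\in\xi_n\subset\xi_m$, with $\dim(\ii_{\xi_m}\otimes k(p)) = i$, and with $\operatorname{length}(\mathcal O_{\xi_n,p}) = l'$, $\operatorname{length}(\mathcal O_{\xi_m,p}) = l$. The subscheme $\xi_m$ decomposes (as a length-$m$ scheme supported at finitely many points) as the disjoint union of its part $\eta$ supported at $p$, of length $l$, and its part $\xi_m'$ supported away from $p$, of length $m-l$; and similarly $\xi_n$ decomposes as $\eta'\sqcup \xi_n'$ with $\eta'$ supported at $p$ of length $l'$ and $\xi_n'$ of length $n-l'$. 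Moreover $\eta'\subset\eta$ and $\xi_n'\subset\xi_m'$, and crucially the condition $\dim(\ii_{\xi_m}\otimes k(p)) = i$ is a condition only on the local piece $\eta$, because $\ii_{\xi_m,p} = \ii_{\eta}$ as ideals in $\mathcal O_{S,p}$. Thus the fibre is (set-theoretically) a product
$$
\pi^{-1}(p)\cap W_{i,[n,m],l',l} \;=\; S^{[l',l]}_{p,i} \;\times\; \bigl(S^{[n-l',m-l]} \smallsetminus (\text{locus meeting } p)\bigr),
$$
where the first factor parametrizes the pair $(\eta',\eta)$ of schemes supported at $p$ and the second parametrizes $(\xi_n',\xi_m')$, a nested pair of lengths $n-l'$ and $m-l$ that avoids $p$ (this uses the conventions $S^{[0,l]}_{p,i} = S^{[l]}_{p,i}$ and $S^{[0,m-l]} = S^{[m-l]}$ for the degenerate cases $l'=0$ or $n-l'=0$). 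Hence
$$
\dim\bigl(\pi^{-1}(p)\cap W_{i,[n,m],l',l}\bigr) \;\leqslant\; \dim S^{[l',l]}_{p,i} + \dim S^{[n-l',m-l]} \;=\; \dim S^{[l',l]}_{p,i} + 2(m-l),
$$
using the irreducibility-and-dimension hypothesis on $S^{[n-l',m-l]}$.

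**The key estimate.** It therefore remains to establish the local bound
$$
\dim S^{[l',l]}_{p,i} \;\leqslant\; 2l - i,
$$
for all relevant $(l',l)$ with $i\geqslant 2$; combining this with the display above gives $\dim(\pi^{-1}(p)\cap W_{i,[n,m],l',l}) \leqslant (2l-i) + 2(m-l) = 2m-i$, finishing the proof. I expect this local estimate to be the main obstacle, and the natural route is to further fibre $S^{[l',l]}_{p,i}$ over $S^{[l]}_{p,i}$ by remembering only $\eta = \xi_l$: the fibre over a fixed $\eta$ is the set of length-$l'$ subschemes $\eta'\subset\eta$, i.e. a closed subscheme of a Grassmannian/flag variety inside $\mathcal O_{\eta}$, whose dimension is bounded by a quantity depending only on $l,l'$ (at worst $l'(l-l')$, and one can do better here since $\eta'$ must be an ideal, but a crude bound may suffice); and then one invokes a bound on $\dim S^{[l]}_{p,i}$, the punctual Hilbert scheme stratified by minimal number of generators $i$ of the ideal. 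The classical fact (going back to work on punctual Hilbert schemes, and used in \cite{Bulois-Evain}, \cite{EL99}) is that the locus in the punctual Hilbert scheme $S^{[l]}_p$ where the ideal needs $\geqslant i$ generators has small dimension — precisely the bound that makes $\dim S^{[l']}_{p} + (\text{flag correction})$ come out to $2l - i$. Assembling these pieces — the local Grassmannian bound for the choice of $\eta'\subset\eta$, and the generator-stratification bound on the punctual Hilbert scheme $S^{[l]}_{p,i}$, both of which are available for the ranges of $(l',l)$ in question (notably when $l-l'\leqslant 2$, where \cite{Bulois-Evain} supplies exactly these dimension counts) — is what I would do, with the generator-stratification bound being the substantive input and the source of the $-i$ in the final inequality.
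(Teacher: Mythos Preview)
Your overall architecture---stratify by $(l',l)$, project to $S$, split the fibre as (punctual part at $p$) $\times$ (part away from $p$), and invoke the hypothesis for the second factor---is exactly what the paper does. The difference, and the gap, is entirely in your ``key estimate'' paragraph.

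Two concrete problems there. First, your suggestion that the crude Grassmannian bound $l'(l-l')$ on the fibre of $S^{[l',l]}_{p,i}\to S^{[l]}_{p,i}$ ``may suffice'' is wrong: for $l'$ near $l/2$ this is quadratic in $l$, far too large to yield $2l-i$. The bound you actually need is $l'-1$, obtained simply by observing that the fibre (all length-$l'$ subschemes of a fixed $\eta$ supported at $p$) injects into the punctual Hilbert scheme $S^{[l']}_p$, whose dimension is $l'-1$ by Brian\c{c}on. This is how the paper proceeds (phrased as the inclusion $S^{[l',l]}_{p,i}\hookrightarrow S^{[l]}_{p,i}\times S^{[l']}_p$). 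Second, your parenthetical restriction ``notably when $l-l'\leqslant 2$'' is a red herring here: no such restriction is needed for this lemma, and inserting it suggests you have conflated this argument with the later ones for $S^{[n,n+1,m]}$ and $S^{[n,n+2,m]}$.

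What remains is the bound $\dim S^{[l]}_{p,i}\leqslant l-i+1$, which you allude to but do not derive. The paper's derivation is short and worth internalizing: the fibre of $S^{[l-1,l]}_{p,i}\to S^{[l]}_{p,i}$ over a fixed $\xi_l$ is $\mathbb{P}\bigl(\mathrm{Soc}(\mathcal{O}_{\xi_l})^\vee\bigr)$, which has dimension exactly $i-2$ by \cite[Lemma~2]{EL99} (the socle dimension equals the minimal number of generators of the ideal minus one). Hence $\dim S^{[l-1,l]}_{p,i}=\dim S^{[l]}_{p,i}+(i-2)$, and since $\dim S^{[l-1,l]}_p=l-1$ by \cite[Corollary~5.9]{Bulois-Evain}, one gets $\dim S^{[l]}_{p,i}\leqslant l-i+1$. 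Combining, $\dim S^{[l',l]}_{p,i}\leqslant (l-i+1)+(l'-1)\leqslant 2l-i$, and the rest of your outline goes through.
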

\begin{proof}
	In view of \eqref{strat W_i}
	it suffices to show that if $W_{i,[n,m],l',l}$ is nonempty 
	then we have ${\rm dim}(W_{i,[n,m],l',l})\leqslant 2m+2-i$. 
	The argument is similar to that of \cite[Lemma 3.3]{RT22},
	along with a key input from \cite{Bulois-Evain}. Consider the projection
	map $p_1:W_{i,[n,m],l',l}\longrightarrow S$ which sends $(p,\xi_n,\xi_m)\mapsto p$. 
	We shall find an upper bound for the dimension of the fiber over 
	a closed point $p\in S$. Let $U$ denote the open subset $S\setminus \{p\}$.
	Given a point $(p,\xi_n,\xi_m)\in p_1^{-1}(p)$, we may write 
	$$\mc O_{\xi_m}=\mc O_{\xi_m,p}\bigoplus \left(\bigoplus_{q\in U}\mc O_{\xi_m,q}\right)\,,\qquad 
		\mc O_{\xi_n}=\mc O_{\xi_n,p}\bigoplus \left(\bigoplus_{q\in U}\mc O_{\xi_n,q}\right)$$
	The quotient $\mc O_{\xi_m}\longrightarrow \mc O_{\xi_n}$ gives rise to quotients 
	$$\mc O_{\xi_m,p}\longrightarrow \mc O_{\xi_n,p}\,,\qquad \left(\bigoplus_{q\in U}\mc O_{\xi_m,q}\right)\longrightarrow 
		\left(\bigoplus_{q\in U}\mc O_{\xi_n,q}\right)$$
	This gives rise to the following map which is an inclusion on closed points 
	\begin{equation}\label{e1}
		p_1^{-1}(p)\longrightarrow S^{[l',l]}_{p,i}\times U^{[n-l',m-l]}\,.
	\end{equation}
	When $l'=0$ the above map is 
	\begin{equation}\label{e1l'=0}
		p_1^{-1}(p)\longrightarrow S^{[l]}_{p,i}\times U^{[n,m-l]}\,.
	\end{equation}
	As $U^{[n-l',m-l]}$ is an open subset of $S^{[n-l',m-l]}$,
	and the latter is irreducible of dimension $2(m-l)$ by our hypothesis, it follows
	that ${\rm dim}(U^{[n-l',m-l]})=2(m-l)$. Next we a bound on the dimension of 
	$S^{[l',l]}_{p,i}$. To do this we shall first give a bound on the dimension
	of $S^{[l]}_{p,i}$.
	
	First we consider the case $l'\neq 0$.
	Fix a point $\xi_l\in S^{[l]}_{p,i}$. 
	Let $M$ be a module over the local ring $\mc O_{S,p}$
	whose support is zero dimensional.
	By ${\rm Soc}(M)$ we mean the space ${\rm Hom}_{\mc O_{S,p}}(k(p),M)$.
	Since the only closed point in the support of $\xi_l$ is $p$, 
	it follows that if we have a subscheme $\xi_{l-1}\subset \xi_l$,
	then the kernel of the map $\mc O_{\xi_l}\to \mc O_{\xi_{l-1}}$ 
	is isomorphic to $k(p)$. Conversely, taking the quotient of an 
	inclusion of $\mc O_S$ modules 
	$k(p)\to \mc O_{\xi_l}$ gives a length $l-1$ subscheme of $\xi_l$. 
	This shows that the set of subschemes of length $l-1$ of $\xi_l$  
	is in bijective correspondence with $\mb P({\rm Soc}(\mc O_{\xi_l})^\vee)$.
	By \cite[Lemma 2]{EL99}, we have ${\rm dim}(\mb P({\rm Soc}(\mc O_{\xi_l})^\vee))=i-2$. 
	Thus, all fibers of the map $S^{[l-1,l]}_{p,i}\to S^{[l]}_{p,i}$ have 
	dimension $i-2$. 
	From this, it follows that 
	$${\rm dim}(S^{[l-1,l]}_{p,i})={\rm dim}(S^{[l]}_{p,i})+i-2\,.$$
	As $S^{[l-1,l]}_{p,i}\subset S^{[l-1,l]}_{p}$
	it follows that ${\rm dim}(S^{[l-1,l]}_{p,i})\leqslant
	{\rm dim}(S^{[l-1,l]}_{p})$.
	In \cite[Corollary 5.9]{Bulois-Evain}, it is proved that 
	${\rm dim}(S^{[l-1,l]}_{p})=l-1$. Thus, we get
	$${\rm dim}(S^{[l]}_{p,i})+i-2={\rm dim}(S^{[l-1,l]}_{p,i})\leqslant
		{\rm dim}(S^{[l-1,l]}_{p})= l-1\,.$$
	The above gives the following bound on the dimension of $S^{[l]}_{p,i}$, 
	\begin{equation}\label{e2}
		{\rm dim}(S^{[l]}_{p,i})\leqslant l-i+1\,.
	\end{equation}
	
	The natural map $S^{[l',l]}_{p,i}\longrightarrow S^{[l]}_{p,i}\times S^{[l']}_p$
	is an inclusion on closed points. As $l'\geqslant 1$, we have 
	${\rm dim}(S^{[l']}_p)=l'-1$, 
	see \cite{Br77}. Thus, it follows that 
	\begin{equation}\label{dim S_p,i^l',l}
	{\rm dim}(S^{[l',l]}_{p,i})\leqslant l-i+1+l'-1=l+l'-i\leqslant 2l-i\,.
	\end{equation}
	Thus, using \eqref{e1} it follows that 
	$${\rm dim}(p_1^{-1}(p))\leqslant 2l-i+2(m-l)=2m-i\,,$$
	from which it follows that 
	$${\rm dim}(W_{i,[n,m],l',l})\leqslant 2m+2-i\,.$$
	
	Next we consider the case $l'=0$. Using \eqref{e1l'=0} and \eqref{e2}
	we get 
	$${\rm dim}(p_1^{-1}(p))\leqslant 2(m-l)+l-i+1=2m-l-i+1\,.$$
	It follows that 
	$${\rm dim}(W_{i,[n,m],0,l})\leqslant 2m-l-i+3\,.$$
	In the proof of \cite[Lemma 3.2]{RT22} it is proved that 
	$l\geqslant {i \choose 2}$. 
	Since $i\geqslant 2$, we have that ${i \choose 2}-1\geqslant 0$. Thus, we get
	$${\rm dim}(W_{i,[n,m],0,l})\leqslant 2m-l-i+3\leqslant 2m +2-i -{i \choose 2}+1
	\leqslant 2m+2-i\,.$$
	This completes the proof 
	of the Lemma.
\end{proof}

\begin{theorem}\label{main-theorem}
	Let $n$ and $m$ be two positive integers such that $n < m$. 
	Then $S^{[n,m,m+1]}$ and $S^{[n,m]}$ are irreducible. 
\end{theorem}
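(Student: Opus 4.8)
\emph{Plan.} I would prove, by strong induction on $M$, the statement
\[
Q(M)\colon\qquad S^{[n,M]}\ \text{is irreducible of dimension}\ 2M\ \text{for every}\ n\ \text{with}\ 0\le n\le M,
\]
where $S^{[0,M]}=S^{[M,M]}=S^{[M]}$ (Fogarty's theorem handles these extreme cases, and $Q(0),Q(1)$ are trivial). In the inductive step, with $Q(M')$ granted for all $M'<M$, the cases $n\in\{0,M\}$ are Fogarty and $n=M-1$ is the smoothness of $S^{[M-1,M]}$ \cite{Cheah98}; so fix $n$ with $0<n<M-1$ and apply the construction of Section~\ref{section preliminaries} with $m$ replaced by $M-1$. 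This produces $\varphi\colon\mb P(\tilde\I_{M-1})\to S\times S^{[n,M-1]}$ together with the surjection $\psi\colon\mb P(\tilde\I_{M-1})\to S^{[n,M]}$ of Lemma~\ref{surjective psi}; moreover $\mb P(\tilde\I_{M-1})$ maps onto $S^{[n,M-1,M]}$, the triple $\xi_n\subset\xi_{M-1}\subset\xi_M$ being recovered from a point $(p,\xi_n,\xi_{M-1},\lambda)$ just as in Lemma~\ref{surjective psi}. Thus it suffices to show that $\mb P(\tilde\I_{M-1})$ is irreducible: it is then of dimension $2M$ (see below), so $S^{[n,M-1,M]}$ is irreducible — which is the second assertion of Theorem~\ref{main-theorem} for the pair $(n,M-1)$ — and, since $S\times S^{[n,M-1]}$ is irreducible of dimension $2M$ by $Q(M-1)$, so is $S^{[n,M]}=\psi(\mb P(\tilde\I_{M-1}))$, of dimension at most $2M$; it is at least $2M$ because $S^{[n,M]}$ contains the $2M$-dimensional locus of pairs $\xi_n\subset\xi_M$ with $\xi_M$ reduced.

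\emph{Stratifying $\mb P(\tilde\I_{M-1})$.} I would stratify $\mb P(\tilde\I_{M-1})$ by the fibre dimension of $\tilde\I_{M-1}$. Over $W_{1,[n,M-1]}$ the sheaf $\tilde\I_{M-1}$ is the structure sheaf, because away from the universal subscheme $\I_{M-1}$ equals $\mathcal O$; hence $\varphi$ restricts to an isomorphism $\varphi^{-1}(W_{1,[n,M-1]})\xrightarrow{\sim}W_{1,[n,M-1]}$, and since $W_{1,[n,M-1]}$ is the complement of the codimension-two set $Z_{M-1}$ in the irreducible scheme $S\times S^{[n,M-1]}$, it is dense, irreducible, of dimension $2M$, and its closure is an irreducible component of $\mb P(\tilde\I_{M-1})$. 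For $i\ge2$ the map $\varphi^{-1}(W_{i,[n,M-1]})\to W_{i,[n,M-1]}$ has all fibres equal to $\mb P^{i-1}$, so Lemma~\ref{dim W_i}, applied with $m$ replaced by $M-1$ — its hypotheses hold since $(M-1)-l\le M-2<M$ for the relevant $l$, so that $Q(M-2),\dots,Q(0)$ apply — gives
\[
\dim\varphi^{-1}(W_{i,[n,M-1]})\ \le\ \dim W_{i,[n,M-1]}+(i-1)\ \le\ \bigl(2(M-1)+2-i\bigr)+(i-1)\ =\ 2M-1 .
\]
Consequently every irreducible component of $\mb P(\tilde\I_{M-1})$ other than $\overline{\varphi^{-1}(W_{1,[n,M-1]})}$ has dimension at most $2M-1$, and the induction is reduced to the claim that $\varphi^{-1}(W_{1,[n,M-1]})$ is \emph{dense} in $\mb P(\tilde\I_{M-1})$, i.e. that no irreducible component is hidden inside $\bigcup_{i\ge2}\varphi^{-1}(W_{i,[n,M-1]})$.

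\emph{The main obstacle.} Proving this density is the step I expect to be hardest: it does not follow from the dimension bounds, and it is here that the degeneration technique of \cite{EL99} and the punctual dimension estimates of \cite{Bulois-Evain} must be brought in. A closed point of $\mb P(\tilde\I_{M-1})$ is a nested triple $\xi_n\subset\xi_{M-1}\subset\xi_M$ with $\xi_M/\xi_{M-1}$ supported at a single point $p$, and it lies in $\varphi^{-1}(W_{1,[n,M-1]})$ exactly when $p\notin\xi_{M-1}$. I would show that every such triple is a flat limit of triples $\xi_n(t)\subset\xi_{M-1}(t)\subset\xi_M(t)$ in which the length-one quotient is supported at a reduced point disjoint from $\xi_{M-1}(t)$. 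The problem localizes along the finite support of $\xi_{M-1}$: away from it the family is kept constant, while near each support point one constructs a one-parameter degeneration of the local configuration — deforming the three nested subschemes and the location of the extra point simultaneously — that splits off a separated reduced point; the control needed on these local deformations comes from Briançon's irreducibility theorem for punctual Hilbert schemes together with the dimension bounds for punctual nested Hilbert schemes of \cite{Bulois-Evain}. Granting this, $\mb P(\tilde\I_{M-1})=\overline{\varphi^{-1}(W_{1,[n,M-1]})}$ is irreducible, which completes the induction and hence the proof of both assertions of Theorem~\ref{main-theorem}.
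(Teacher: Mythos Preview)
Your induction framework, the map $\psi$, and the stratification by the $W_{i}$ are exactly what the paper does, and you have correctly identified where the difficulty lies: ruling out irreducible components of $\mb P(\tilde\I_{M-1})$ lying entirely over $\bigcup_{i\ge2}W_{i,[n,M-1]}$. However, you have missed the one idea that dissolves this obstacle, and your proposed substitute is both vague and unnecessary.

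The paper (following \cite{EL99}) does \emph{not} prove density by exhibiting a flat degeneration of a given triple $(\xi_n,\xi_{M-1},\xi_M)$ to one with the extra point separated. Instead it proves a \emph{lower} bound on the dimension of every irreducible component of $\mb P(\tilde\I_{M-1})$. Choose a surjection $\mc E\twoheadrightarrow\tilde\I_{M-1}$ with $\mc E$ locally free on $S\times S^{[n,M-1]}$, and let $\mc F$ be the kernel. Because the ideal sheaf of a length-$0$ subscheme of a smooth surface has projective dimension $1$, the restriction of $\mc F$ to each fibre $S\times\{x\}$ is locally free; by flatness of $\tilde\I_{M-1}$ over $S^{[n,M-1]}$ and a standard local criterion, $\mc F$ itself is locally free, of rank $r$ say, and $\mc E$ has rank $r+1$. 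Then $\mb P(\tilde\I_{M-1})\subset\mb P(\mc E)$ is the zero locus of the composite $\pi^*\mc F\to\pi^*\mc E\to\mc O_{\mb P(\mc E)}(1)$, hence is locally cut out by $r$ equations inside the irreducible scheme $\mb P(\mc E)$ of dimension $2M+r$. Therefore every irreducible component of $\mb P(\tilde\I_{M-1})$ has dimension at least $2M$. Combined with your bound $\dim\varphi^{-1}(W_{i,[n,M-1]})\le 2M-1$ for $i\ge2$, this forces every component to meet the open set $\varphi^{-1}(W_{1,[n,M-1]})$, and irreducibility follows immediately.

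Your reference to ``the degeneration technique of \cite{EL99}'' suggests you think Ellingsrud--Lehn argue by moving points; they do not --- the locally free resolution \emph{is} their technique. The direct degeneration you sketch (simultaneously deforming $\xi_n\subset\xi_{M-1}\subset\xi_M$ so that the extra point splits off) would have to be carried out for an arbitrary punctual configuration and keep all three nestings flat; nothing in \cite{Br77} or \cite{Bulois-Evain} gives this, and I do not see how you would produce such a family in general. Once you insert the resolution argument, the proof is complete and identical to the paper's.
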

\begin{proof}
	Let $\mc A$ be the
	set of pairs of integers $(a,b)$ with $1\leqslant a<b$ 
	and $S^{[a,b]}$ reducible. Assume $\mc A$ is nonempty. By \cite[Corollary 7.3]{Fo}
	for every $b\geqslant 2$ the pair $(1,b)\notin \mc A$. Similarly, by 
	\cite[Theorem 3.0.1]{Cheah98} for every $a\geqslant 1$ the  pair
	$(a,a+1)\notin \mc A$. Consider the projection map 
	to the first coordinate $\mc A\longrightarrow \Z_{\geqslant 1}$,
	where $\Z_{\geqslant 1}$ denotes the set of positive integers. 
	Let $n$ be the smallest integer in the image of this map. 
	Clearly, $n>1$. Among the set of integers $b$ such that $(n,b)\in \mc A$,
	let $b_0$ be the smallest. Clearly, $b_0>n+1$. Let $m=b_0-1$. 
	Then $m\geqslant n+1$. We conclude that for all pairs of integers
	$(a,b)$ with $1\leqslant a<b$, if $a<n$ then $S^{[a,b]}$ is irreducible,
	and for all integers $b$ such that $n<b\leqslant m$, $S^{[n,b]}$ is irreducible. 
	Further $S^{[n,m+1]}$ 
	is reducible. We will arrive at a contradiction, which will prove
	that $\mc A$ is empty, and hence prove the theorem.
	
	Note that if $S^{[a,b]}$ is irreducible
	then its dimension is $2b$. This can be seen as follows.
	Consider the open subset consisting of pairs $(\xi_a,\xi_b)$ 
	such that the support of $\xi_b$ has $b$ distinct points. 
	The natural map from this open set to $S^{[b]}$ is dominant and quasi-finite and so 
	this open set has dimension $2b$. Since $S^{[a,b]}$ is irreducible, it follows
	that it has dimension $2b$.

	The method of proof is identical to the method in \cite[Proposition 5]{EL99}. 
	Consider the map $\varphi$ in \eqref{diagram phi psi}.
	We claim that we can find locally free sheaves $\mc F$ of rank $r$ 
	and $\mc E$ of rank $r+1$ on $S\times S^{[n,m]}$
	which fit into a short exact sequence  
	$$0\longrightarrow \mc F\longrightarrow \mc E\longrightarrow \tilde \I_m\longrightarrow 0\,$$
	on $S\times S^{[n,m]}$. Let $\mc E$ be a locally free sheaf which surjects 
	onto $\tilde \I_m$ and let $\mc F$ be the kernel of this surjection. 
	As $\tilde \I_m$ is flat over $S^{[n,m]}$ and $\mc E$ is flat, it follows
	that $\mc F$ is flat over $S^{[n,m]}$. If $x\in S^{[n,m]}$ is a closed point,
	then the restriction to $S\times x$ gives a short exact sequence 
	$$0\longrightarrow \mc F\vert_{S\times x}\longrightarrow \mc E\vert_{S\times x}\longrightarrow 
		\tilde \I_m\vert_{S\times x}\longrightarrow 0\,.$$
	As $\tilde \I_m\vert_{S\times x}$ is the ideal sheaf of a zero dimensional 
	scheme, it follows this has projective dimension 1. Thus, it follows
	that $\mc F\vert_{S\times x}$ is locally free on $S$. 
	Using the following result from commutative algebra,
	we see that $\mc F$ is locally free. Let $A \to B$ be a local 
	homomorphism of local rings, $M$ a finite $B$ module which is flat over $A$ and 
	$M/(\mf m_A M)$ is a free $B/(\mf m_A B)$ module. Then $M$ is a free $B$ module.
	It is clear that if $\mc F$ has rank $r$ then $\mc E$ has rank $r+1$. This 
	completes the proof of the claim.

	Let $X$ be a scheme and suppose that 
	$0\to \mc A\to \mc B\to \mc C\to 0$
	is a short exact sequence of coherent sheaves on $X$.
	Let ${\rm Sym}^*(\mc B)$ denote the sheaf of algebras on $X$
	associated to $\mc B$. Let $\mc J\subset {\rm Sym}^*(\mc B)$ 
	denote the sheaf of ideals generated by $\mc A$. Then we have 
	$${\rm Sym}^*(\mc C)={\rm Sym}^*(\mc B)/\mc J\,.$$
	On ${\rm Proj}({\rm Sym}^*(\mc B))\xrightarrow{\pi}X$ we have the map 
	of sheaves $\pi^*\mc B\to \mc O(1)$.
	The sheaf of ideals of 
	${\rm Proj}({\rm Sym}^*(\mc C))\subset {\rm Proj}({\rm Sym}^*(\mc B))$
	is the image of the composite $\pi^*\mc A\to \pi^*\mc B\to \mc O(1)$.
	
	Let $\pi:\mb P(\mc E)\longrightarrow S\times S^{[n,m]}$
	denote the projective bundle.
	It follows that $\mb P(\tilde \I_m)\subset \mb P(\mc E)$
	is the vanishing locus of the composite homomorphism 
	$\pi^*\mc F\longrightarrow \pi^*\mc E\longrightarrow \mc O_{\mb P(\mc E)}(1)$.
	As $S\times S^{[n,m]}$ is irreducible, it follows 
	that $\mb P(\mc E)$ is irreducible of dimension $2m+2+r$. 
	As $\mb P(\tilde \I_m)$ is locally cut out by $r$ 
	equations, it follows that each irreducible component 
	of $\mb P(\tilde \I_m)$ has dimension at least $2m+2$.
	
	Let $i\geqslant 2$. The hypothesis of Lemma \ref{dim W_i} holds and so we 
	get that ${\rm dim}(W_{i,[n,m]})\leqslant 2m+2-i$. The dimension 
	of the fiber of $\varphi:\mb P(\tilde \I_m)\longrightarrow S\times S^{[n,m]}$
	over a point $(p,\xi_n,\xi_m)\in W_{i,[n,m]}$ is $i-1$. Thus, 
	$${\rm dim}(\varphi^{-1}(W_{i,[n,m]}))\leqslant 2m+2-i+i-1=2m+1\,.$$
	Let $T$ be an irreducible component of $\mb P(\tilde \I_m)$. 
	As ${\rm dim}(T)\geqslant 2m+2$, it follows that $T$ cannot be 
	contained in $\varphi^{-1}(W_{i,[n,m]})$ for any $i\geqslant 2$. 
	Thus, $T$ meets the set $\varphi^{-1}(W_{1,[n,m]})$. 
	Note that $W_{1,[n,m]}$ is the complement of $Z_m$ and so is an 
	open subset of $S\times S^{[n,m]}$. Moreover, it is clear that 
	$$\varphi: \varphi^{-1}(W_{1,[n,m]}) \longrightarrow W_{1,[n,m]}$$
	is an isomorphism. Let $\widetilde W_1$ denote the open and irreducible 
	subset $\varphi^{-1}(W_{1,[n,m]})$. It follows that $T\cap \widetilde W_1$
	is open in $T$ and so is also dense in $T$. It follows that 
	$T$ is contained in the closure of $\widetilde W_1$. 
	Thus, every irreducible component is contained in the closure 
	of $\widetilde W_1$. As $\widetilde W_1$ is irreducible, so is its closure. 
	It follows
	that every irreducible component of $\mb P(\tilde \I_m)$ is contained in the 
	closure of $\widetilde W_1$. Thus, there is only one irreducible component,
	that is, $\mb P(\tilde \I_m)$ is irreducible. 
	
	We saw in Lemma \ref{surjective psi} that $\psi$ is surjective.
	It follows that $S^{[n,m+1]}$ is irreducible. This is a contradiction and so 
	$\mc A$ is empty. 
	As $\mb P(\tilde \I_m)\cong S^{[n,m,m+1]}$,
	the above discussion also shows that $S^{[n,m,m+1]}$ is irreducible.
	The completes the proof.
\end{proof}

\section{Irreducibility of $S^{[n,n+1,m]}$}
For a tuple of positive integers $a,b,c$ with $a<b<c$,
the nested Hilbert scheme $S^{[a,b,c]}$ parametrizes nested closed subschemes
$\xi_a \subset \xi_b \subset \xi_c$ of $S$,
where $\xi_i$ is a finite scheme of length $i$.
We have the universal nested family of closed 
subschemes $Z_c \subset S \times S^{[a,b,c]}$.
The closed points of $Z_c$ have the following descriptions.
\begin{align*}
    Z_c &=\{(p,\xi_a,\xi_b, \xi_c) \in S \times S^{[a,b,c]}\,\, \vert 
	\,\,  p \in \xi_c \}\,.
\end{align*}

We have the projection map 
$$\pi_c:S^{[a,b,c]}\longrightarrow S^{[c]}\,.$$
Let $\I_c$ denote the ideal sheaf of the universal subscheme
inside $S\times S^{[c]}$. Consider the map 
$${\rm Id}_S\times \pi_c:S\times S^{[a,b,c]}\longrightarrow S\times S^{[c]}\,.$$
Denote the pullback 
$$\tilde \I_c:=({\rm Id}_S\times \pi_c)^*\I_c\,.$$ 
Consider the projective bundle 
\begin{equation}
	\varphi: \mb P(\tilde \I_c)\longrightarrow S\times S^{[a,b,c]}\,.
\end{equation}
We define the map $\psi : \P(\tilde \I_c) \longrightarrow S^{[a,b,c+1]}$
in the same way as defined in \eqref{def psi} in \S\ref{section preliminaries}.
We have the following maps
\begin{equation}\label{phi psi a,b,c}
	\xymatrix{
		\mb P(\tilde \I_c)\ar[r]^{\psi}\ar[d]_{\varphi} & S^{[a,b,c+1]}\\
		S\times S^{[a,b,c]}
	}
\end{equation}
The pointwise description of the map $\psi$ is similar to the one 
given in \S\ref{section preliminaries} and is left to the reader.
By similar argument as in the proof of Lemma \ref{surjective psi}, 
we conclude that the map $\psi$ is surjective on closed points.

As in the case of $S^{[n,m]}$, here also we define the subsets $W_{i,[n,n+1,m]}$ in a similar manner.
Let $W_{i,[a,b,c]}$ denote the locus in $S \times S^{[a,b,c]}$
where the ideal sheaf $\I_c$ of $Z_c$ is generated by $i$ elements,
that is,
$$ W_{i,[a,b,c]} := \{(p, \xi_a,\xi_b,\xi_c) \in S \times S^{[a,b,c]}\,\,  \vert \,\, 
{\rm dim}(\ii_{\xi_c} \otimes k(p))=i \,\, \}\,.$$
The set $W_{1,[a,b,c]}$ is the complement of the 
universal family $Z_c$ in $S \times S^{[a,b,c]}$.
Define subsets $W_{i,[a,b,c],l'',l',l}\subset W_{i,[a,b,c]}$
as follows. 

\begin{definition}
	Let $i\geqslant 2$. Let $W_{i,[a,b,c],l'',l',l}\subset W_{i,[a,b,c]}$ be the subset 
	consisting of points $(p,\xi_a,\xi_b,\xi_c)$ such that 
	${\rm length}(\O_{\xi_{a},p})=l''$, ${\rm length}(\O_{\xi_{b},p})=l'$ and 
	${\rm length}(\O_{\xi_{c},p})=l$.
\end{definition}
Notice that for the set $W_{i,[a,b,c],l'',l',l}$ to be nonempty we 
need that $0\leqslant a-l''\leqslant b-l'\leqslant c-l$, 
$0\leqslant l''\leqslant l'\leqslant l$
and $1\leqslant l$. 
As $i\geqslant 2$, we
have that $p\in {\rm Supp}(\xi_m)$, which implies that $1\leqslant l$. 
Clearly, 
\begin{equation}\label{strat W_i_a,b,c}
	W_{i,[a,b,c]}=\bigcup_{l,l',l''}W_{i,[a,b,c],l'',l',l}\,.
\end{equation}

Let $p\in S$ denote a closed point. 
Let
$S^{[l'',l',l]}_{p,i}$
denote the subset of $S^{[l'',l',l]}$ consisting of the tuples $(\xi_{l''},\xi_{l'},\xi_l)$
satisfying the following two conditions:
${\rm Supp}(\xi_l)=\{p\}$ and ${\rm dim}(\ii_{\xi_l}\otimes k(p))=i$.

\begin{lemma}\label{dimension W_i,n+1}
Let $n$ and $m$ be two positive integers such that $n+1 < m$.
Consider triples of integers $(l'',l',l)$ which satisfy the following three conditions 
\begin{itemize}
	\item $0\leqslant n-l''\leqslant n+1-l'\leqslant m-l$, 
	\item $0 \leqslant l'' \leqslant l'\leqslant l$, and 
	\item $1 \leqslant l$.
\end{itemize}
Assume that $S^{[n-l'',n+1-l',m-l]}$ is irreducible of dimension
$2(m-l)$ for all such triples.
Let $i \geqslant 2$.
Then $\dim ( W_{i,[n,n+1,m]}) \leqslant 2m+2-i$.
\end{lemma}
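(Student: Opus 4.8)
The plan is to follow the proof of Lemma~\ref{dim W_i} essentially verbatim; the one genuinely new ingredient is a sharper bound on the dimension of the punctual locus $S^{[l'',l',l]}_{p,i}$, for which the hypothesis that the middle index exceeds the smallest by exactly $1$ is crucial.

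By~\eqref{strat W_i_a,b,c} it suffices to bound $\dim W_{i,[n,n+1,m],l'',l',l}$ for each nonempty stratum. Fix such a triple $(l'',l',l)$, consider the projection $p_1:W_{i,[n,n+1,m],l'',l',l}\to S$ sending $(p,\xi_n,\xi_{n+1},\xi_m)\mapsto p$, and bound the fiber over a closed point $p\in S$. Writing each of $\mc O_{\xi_n},\mc O_{\xi_{n+1}},\mc O_{\xi_m}$ as the direct sum of its localization at $p$ and its part supported on $U:=S\setminus\{p\}$, and observing that the surjections $\mc O_{\xi_m}\twoheadrightarrow\mc O_{\xi_{n+1}}\twoheadrightarrow\mc O_{\xi_n}$ respect this decomposition, one obtains --- exactly as in the derivation of~\eqref{e1} --- a map injective on closed points
$$p_1^{-1}(p)\longrightarrow S^{[l'',l',l]}_{p,i}\times U^{[n-l'',\,n+1-l',\,m-l]}.$$
The second factor is open in $S^{[n-l'',n+1-l',m-l]}$, which by hypothesis is irreducible of dimension $2(m-l)$, so it has dimension $2(m-l)$; hence
$$\dim W_{i,[n,n+1,m],l'',l',l}\leqslant \dim S^{[l'',l',l]}_{p,i}+2(m-l)+2,$$
and everything reduces to estimating $\dim S^{[l'',l',l]}_{p,i}$.

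Assume first $l'\geqslant1$. The natural map $S^{[l'',l',l]}_{p,i}\to S^{[l'',l']}_p\times S^{[l]}_{p,i}$ is injective on closed points. By~\eqref{e2} we have $\dim S^{[l]}_{p,i}\leqslant l-i+1$. Since $\xi_n\subset\xi_{n+1}$ and the two schemes differ in total length by $1$, the local lengths at $p$ satisfy $0\leqslant l'-l''\leqslant1$, and in either case $\dim S^{[l'',l']}_p=l'-1$: by \cite{Br77} when $l''=l'$ (where this space is just $S^{[l']}_p$), and by \cite[Corollary 5.9]{Bulois-Evain} when $l'-l''=1$. Hence $\dim S^{[l'',l',l]}_{p,i}\leqslant(l'-1)+(l-i+1)=l+l'-i$, and since $l'\leqslant l$ we get $\dim W_{i,[n,n+1,m],l'',l',l}\leqslant(l+l'-i)+2(m-l)+2\leqslant 2m+2-i$. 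If instead $l'=0$, then $l''=0$ and $p$ meets only $\supp(\xi_m)$; the map above becomes $p_1^{-1}(p)\to S^{[l]}_{p,i}\times U^{[n,n+1,m-l]}$, so by~\eqref{e2}, $\dim W_{i,[n,n+1,m],0,0,l}\leqslant(l-i+1)+2(m-l)+2=2m-l-i+3$. As in the proof of \cite[Lemma 3.2]{RT22} one has $l\geqslant\binom{i}{2}$, and $\binom{i}{2}-1\geqslant0$ since $i\geqslant2$; therefore $\dim W_{i,[n,n+1,m],0,0,l}\leqslant 2m+2-i-\binom{i}{2}+1\leqslant 2m+2-i$. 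Together with~\eqref{strat W_i_a,b,c} this proves the lemma.

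The only step that is not a formal transcription of Lemma~\ref{dim W_i} is the bound $\dim S^{[l'',l',l]}_{p,i}\leqslant l+l'-i$, and this is where I expect the care to be needed. The naive analogue of the two-step argument --- mapping $S^{[l'',l',l]}_{p,i}$ to $S^{[l',l]}_{p,i}\times S^{[l'']}_p$ and using $\dim S^{[l'']}_p=l''-1$ --- would only yield $\dim S^{[l'',l',l]}_{p,i}\leqslant(l+l'-i)+(l''-1)$, which is too weak once $l''$ is of the same order as $l$. The remedy is to peel off the pair $\xi_n\subset\xi_{n+1}$, locally at $p$, as a single point of the punctual nested Hilbert scheme $S^{[l'',l']}_p$ and to invoke the Bulois--Evain equality $\dim S^{[l'-1,l']}_p=l'-1$: it is precisely the constraint $l'-l''\leqslant1$, equivalently the fact that the middle term of the nesting has length one more than the smallest, that lets this pair be absorbed into the bound $l'-1$ with no extra $l''$. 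This is also why the method does not extend mechanically to nestings with larger gaps, as remarked in the introduction. Once this punctual estimate and the splitting behind the map $p_1^{-1}(p)\to S^{[l'',l',l]}_{p,i}\times U^{[\cdots]}$ are in hand, the remainder is bookkeeping identical to that in Lemma~\ref{dim W_i}.
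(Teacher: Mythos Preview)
Your proof is correct and follows essentially the same approach as the paper's: both stratify $W_{i,[n,n+1,m]}$ by the local lengths $(l'',l',l)$, split the fiber of $p_1$ as a product of a punctual piece $S^{[l'',l',l]}_{p,i}$ and an open subset of $S^{[n-l'',n+1-l',m-l]}$, and bound the punctual piece by mapping to $S^{[l'',l']}_p\times S^{[l]}_{p,i}$ and invoking the Bulois--Evain equality $\dim S^{[l'-1,l']}_p=l'-1$. The only cosmetic differences are that you treat the case $l'=0$ explicitly (as in Lemma~\ref{dim W_i}) whereas the paper absorbs it into the $l''=l'$ case via~\eqref{dim S_p,i^l',l}, and the paper splits the irreducibility of $S^{[n-l'',n+1-l',m-l]}$ into the subcases $l'=l''$ (hypothesis) and $l'=l''+1$ (Theorem~\ref{main-theorem}) rather than citing the hypothesis uniformly.
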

\begin{proof}
It suffices to prove that for $i\geqslant 2$, 
if $W_{i,[n,n+1,m],l'',l',l}$ is nonempty then 
$$\dim (W_{i,[n,n+1,m],l'',l',l}) \leqslant 2m+2-i\,.$$
The proof is very similar to the proof of Lemma \ref{dim W_i}
and so we omit some details. 
Consider  
$p_1 :  W_{i,[n,n+1,m],l'',l',l} \longrightarrow S$
which sends $(p,\xi_n,\xi_{n+1},\xi_m)$ to $p$.
We find an upper bound for the dimension of the fiber over a closed point $p \in S$.
Let $U$ be the open subset $S \setminus \{p\}$.
Given a point $(p,\xi_n,\xi_{n+1},\xi_m) \in p_1^{-1}(p)$,
the quotient $\O_{\xi_m}\longrightarrow \O_{\xi_{n+1}}$ gives rise to quotients 
$$\O_{\xi_m,p}\longrightarrow \O_{\xi_{n+1},p}\,,\qquad \left(\bigoplus_{q\in U} \O_{\xi_m,q}\right)\longrightarrow \left(\bigoplus_{q\in U} \O_{\xi_{n+1},q}\right)$$
and the quotient $\O_{\xi_{n+1}}\longrightarrow \O_{\xi_n}$
gives rise to quotients 
$$\O_{\xi_{n+1},p} \longrightarrow \O_{\xi_n,p}\,,\qquad \left(\bigoplus_{q\in U} \O_{\xi_{n+1},q}\right) \longrightarrow \left(\bigoplus_{q\in U} \O_{\xi_n,q}\right)\,.$$
This gives rise to the following map which is an inclusion on closed points
\begin{equation}
p_1^{-1} (p) \longrightarrow S_{p,i}^{[l'',l',l]} \times U^{[n-l'',n+1-l',m-l]}\,.
\end{equation}
We note that $n+1-l'\geqslant n-l''$, that is,
$l'\leqslant l''+1$.
As $l''\leqslant l'$, there are only the following two possibilities: 
either $l'=l''$
or $l'=l''+1$. 

If $l'=l''$ then by our hypothesis 
$S^{[n-l'',n+1-l',m-l]}$ is irreducible of dimension $2(m-l)$.
If $l'=l''+ 1$ then $S^{[n-l'',n+1-l',m-l]}$ is same as 
$S^{[n+1-l',m-l]}$, which is irreducible of dimension $2(m-l)$
by Theorem \ref{main-theorem}.
So it follows that $\dim U^{[n-l'',n+1-l',m-l]}=2(m-l)$.

Now we need to find an upper bound of $\dim (S_{p,i}^{[l'',l',l]})$.
We have two cases: $l''= l'-1$ and $l''=l'$.
We first consider the case $l''=l'-1$.
There is a natural map 
$$S_{p,i}^{[l'',l',l]} \longrightarrow S_{p,i}^{[l]} \times S_{p}^{[l'',l']}$$
which is an inclusion on closed points.
As $l''=l'-1$, by \cite[Corollary 5.9]{Bulois-Evain}
we have $\dim (S^{[l'',l']}_p) = l'-1$.
Also from \eqref{e2}, we get 
$\dim (S_{p,i}^{l}) \leqslant l+1-i$.
So it follows that 
$$\dim S_{p,i}^{[l'',l',l]} \leqslant (l+1-i) + (l'-1) \leqslant 2l-i \,.$$
This gives 
$$\dim(p_1^{-1}(p)) \leqslant 2(m-l) + 2l-i=2m-i\,.$$
Thus, we get  
$$\dim (W_{i,[n,n+1,m],l'',l',l}) \leqslant 2m+2-i\,.$$

Next we consider the case $l''=l'$.
In this case $S_{p,i}^{[l'',l',l]}$ is same as $S_{p,i}^{[l',l]}$ which 
has dimension at most $2l-i$ by \eqref{dim S_p,i^l',l}.
Thus again we get 
$$\dim (W_{i,[n,n+1,m],l'',l',l}) \leqslant 2m+2-i\,.$$
This proves the lemma.
\end{proof}

\begin{theorem}\label{irreducible n,n+1,m}
Let $n$ and $m$ be two positive integers such that $n+1 < m$.
Then $S^{[n,n+1,m,m+1]}$ and $S^{[n,n+1,m]}$ is irreducible.
\end{theorem}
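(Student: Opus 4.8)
The plan is to repeat, almost verbatim, the minimal‑counterexample argument used to prove Theorem~\ref{main-theorem}, now with the two‑step families there replaced by three‑step families $S^{[a,a+1,c]}$, using the maps $\varphi,\psi$ of~\eqref{phi psi a,b,c} and the identification $\mb P(\tilde\I_m)\cong S^{[n,n+1,m,m+1]}$ (established just as $\mb P(\tilde\I_m)\cong S^{[n,m,m+1]}$ was in Theorem~\ref{main-theorem}), so that $\varphi$ is a projective bundle over $S\times S^{[n,n+1,m]}$ and, by the analogue of Lemma~\ref{surjective psi}, $\psi$ is surjective on closed points. First I would let $\mc B$ be the set of pairs of positive integers $(a,c)$ with $a+1<c$ and $S^{[a,a+1,c]}$ reducible and suppose $\mc B\neq\emptyset$. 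By \cite[\S3.A]{Addington} the scheme $S^{[a,a+1,a+2]}$ is irreducible, so $(a,a+2)\notin\mc B$ and every element of $\mc B$ has $c\geqslant a+3$. Let $n$ be the smallest first coordinate occurring in $\mc B$, let $c_0$ be the smallest $c$ with $(n,c)\in\mc B$, and put $m=c_0-1$, so $m\geqslant n+2$. Then $S^{[a,a+1,c]}$ is irreducible whenever $a<n$ and $c>a+1$, the scheme $S^{[n,n+1,c]}$ is irreducible for $n+1<c\leqslant m$ (in particular $S^{[n,n+1,m]}$ is irreducible), and $S^{[n,n+1,m+1]}$ is reducible. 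As in Theorem~\ref{main-theorem}, any irreducible $S^{[a,a+1,c]}$ has dimension $2c$: the open locus where the largest subscheme is reduced maps dominantly and quasi‑finitely to $S^{[c]}$.

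Next I would check that the hypothesis of Lemma~\ref{dimension W_i,n+1} holds for this $(n,m)$, that is, that $S^{[n-l'',n+1-l',m-l]}$ is irreducible of dimension $2(m-l)$ for every admissible triple $(l'',l',l)$. As in the proof of that lemma, either $l'=l''+1$, when the scheme is $S^{[n+1-l',m-l]}$ and is irreducible of dimension $2(m-l)$ by Theorem~\ref{main-theorem}, or $l'=l''$, when the scheme is $S^{[n-l'',n+1-l'',m-l]}$. In the latter case, if the last two indices coincide it is the two‑step scheme $S^{[n-l'',n+1-l'']}$, irreducible of dimension $2(m-l)$ by \cite[Theorem 3.0.1]{Cheah98}; otherwise it is a genuine three‑step scheme with top index $m-l\leqslant m-1<c_0$ and first index $n-l''\leqslant n$, hence irreducible of dimension $2(m-l)$ either by minimality of $n$ (when $n-l''<n$) or by minimality of $c_0$ (when $n-l''=n$, so that $n+1<m-l<c_0$); the few degenerate sub‑cases, in which an index drops to $0$ or $1$, reduce to the irreducibility of a Hilbert scheme of points or of a scheme $S^{[1,c]}$ (\cite[Theorem 2.4]{fo68}, \cite[Corollary 7.3]{Fo}). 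Hence Lemma~\ref{dimension W_i,n+1} applies and gives ${\rm dim}(W_{i,[n,n+1,m]})\leqslant 2m+2-i$ for all $i\geqslant 2$.

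The remainder is a line‑by‑line copy of the proof of Theorem~\ref{main-theorem}. Since $S^{[n,n+1,m]}$ is irreducible of dimension $2m$, so is $S\times S^{[n,n+1,m]}$, of dimension $2m+2$. Choosing a surjection $\mc E\longrightarrow\tilde\I_m$ from a locally free sheaf $\mc E$ and letting $\mc F$ be its kernel, $\mc F$ is locally free: fibrewise over $S^{[n,n+1,m]}$ the sheaf $\tilde\I_m$ is the ideal sheaf of a zero‑dimensional subscheme of the smooth surface $S$, hence has projective dimension one, and the commutative‑algebra fact recalled in the proof of Theorem~\ref{main-theorem} promotes fibrewise freeness of $\mc F$ to freeness; say $\mc F$ has rank $r$, so $\mc E$ has rank $r+1$. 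Then $\mb P(\tilde\I_m)\cong S^{[n,n+1,m,m+1]}$ is the zero locus in $\mb P(\mc E)$ of the composite $\pi^*\mc F\to\pi^*\mc E\to\mc O_{\mb P(\mc E)}(1)$, locally $r$ equations, so every irreducible component of $S^{[n,n+1,m,m+1]}$ has dimension at least $2m+2$. The fibre of $\varphi$ over a point of $W_{i,[n,n+1,m]}$ has dimension $i-1$, so ${\rm dim}(\varphi^{-1}(W_{i,[n,n+1,m]}))\leqslant 2m+1$ for $i\geqslant 2$, and no component of $\mb P(\tilde\I_m)$ can lie in any such locus. Hence every component meets $\varphi^{-1}(W_{1,[n,n+1,m]})$, on which $\varphi$ restricts to an isomorphism onto the open subset $W_{1,[n,n+1,m]}$ of the irreducible scheme $S\times S^{[n,n+1,m]}$; so every component lies in the closure of this single irreducible open set, and $S^{[n,n+1,m,m+1]}\cong\mb P(\tilde\I_m)$ is irreducible. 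As $\psi$ is surjective on closed points, $S^{[n,n+1,m+1]}$ is irreducible, contradicting the choice of $m$. Thus $\mc B=\emptyset$, that is, $S^{[n,n+1,m]}$ is irreducible for all $n+1<m$; running the same construction once more — the hypothesis of Lemma~\ref{dimension W_i,n+1} now holding with no extra assumption — shows that $S^{[n,n+1,m,m+1]}\cong\mb P(\tilde\I_m)$ is irreducible too.

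The step that really needs care is the verification in the second paragraph: one must be sure that the minimality built into the choice of $(n,m)$, together with Theorem~\ref{main-theorem} and \cite[Theorem 3.0.1]{Cheah98}, genuinely covers every admissible triple $(l'',l',l)$ appearing in Lemma~\ref{dimension W_i,n+1}, because this is exactly what both makes the bound ${\rm dim}(W_{i,[n,n+1,m]})\leqslant 2m+2-i$ available and breaks the apparent circularity with the irreducibility of $S^{[n,n+1,m]}$ (which enters, via $S\times S^{[n,n+1,m]}$, in the lower bound for the dimensions of the irreducible components of $\mb P(\tilde\I_m)$). Everything else is a faithful transcription of the proof of Theorem~\ref{main-theorem}.
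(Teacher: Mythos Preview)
Your proposal is correct and follows essentially the same approach as the paper: a minimal-counterexample induction on the set of pairs $(a,c)$ with $S^{[a,a+1,c]}$ reducible, applying Lemma~\ref{dimension W_i,n+1} in place of Lemma~\ref{dim W_i} and then repeating verbatim the $\mb P(\tilde\I_m)$ argument from Theorem~\ref{main-theorem}. The paper's proof is terser, simply asserting that ``a similar argument as in the proof of Theorem~\ref{main-theorem}'' works; you have correctly unpacked the one nontrivial step it leaves implicit, namely the verification that the minimality of $(n,c_0)$ together with Theorem~\ref{main-theorem} and Cheah's theorem covers every admissible triple in the hypothesis of Lemma~\ref{dimension W_i,n+1}. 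The only cosmetic difference is that you cite \cite[\S3.A]{Addington} for the base case $S^{[a,a+1,a+2]}$ while the paper cites \cite[Theorem~3.10]{RT22}; either works.
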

\begin{proof}
    We follow the same method as we used in the proof of Theorem \ref{main-theorem}.
    Let $\mc A$ be the
	set of pairs of integers $(a,b)$ with $1\leqslant a$, $a+1<b$ 
	and $S^{[a,a+1,b]}$ reducible. 
	Assume that $\mc A$ is nonempty. 
	By \cite[Theorem 3.10]{RT22}
	for every $a \geqslant 1$ the pair $(a,a+2)\notin \mc A$. Consider the projection map 
	to the first coordinate $\mc A\longrightarrow \Z_{\geqslant 1}$. 
	Let $n$ be the smallest integer in the image of this map. 
	Among the set of integers $b$ such that $(n,b)\in \mc A$,
	let $b_0$ be the smallest. Clearly, $b_0>n+2$. Let $m=b_0-1$. 
	Then $m\geqslant n+2$. 
	We conclude that for all pairs of integers
	$(a,b)$ with $1\leqslant a$, $a+1<b$, if $a<n$ then $S^{[a,a+1,b]}$ is irreducible
	and $S^{[n,n+1,b]}$ is irreducible if $b\leqslant m$. 
	Further $S^{[n,n+1,m+1]}$ 
	is reducible. Note that if $S^{[a,a+1,b]}$ is irreducible
	then its dimension is $2b$.
	A similar argument as in the proof of Theorem \ref{main-theorem}, 
	after replacing Lemma \ref{dim W_i} with Lemma \ref{dimension W_i,n+1},
	concludes the proof of the Theorem. 
\end{proof}

\section{Irreducibility of $S^{[n,n+2,m]}$}
We begin with the following Lemma.
\begin{lemma}\label{dimension W_i,n+2}
Fix integers  $1 \leqslant n$ and $n+2<m$.
Consider triples of integers $(l'',l',l)$ which satisfy the following three conditions 
\begin{itemize}
	\item $0\leqslant n-l''\leqslant n+2-l'\leqslant m-l$, 
	\item $0 \leqslant l'' \leqslant l'\leqslant l$, and 
	\item $1 \leqslant l$.
\end{itemize}
Assume that $S^{[n-l'',n+2-l',m-l]}$ is irreducible of dimension
$2(m-l)$ for all such triples. 
Let $i \geqslant 2$.
Then $\dim ( W_{i,[n,n+2,m]}) \leqslant 2m+2-i$.
\end{lemma}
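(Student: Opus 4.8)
The plan is to adapt the proof of Lemma \ref{dimension W_i,n+1} essentially line by line; the one genuinely new ingredient is a dimension estimate for the punctual nested Hilbert schemes $S_p^{[l'',l']}$ in the case $l'-l''=2$, which is available from \cite{Bulois-Evain} (these dimensions were computed there for gap at most $2$, as recalled in the introduction). By the stratification $W_{i,[n,n+2,m]}=\bigcup_{l'',l',l}W_{i,[n,n+2,m],l'',l',l}$ it suffices to show that each nonempty stratum has dimension at most $2m+2-i$. Fixing such a stratum, I would consider the projection $p_1\colon W_{i,[n,n+2,m],l'',l',l}\to S$, $(p,\xi_n,\xi_{n+2},\xi_m)\mapsto p$, and bound the dimension of the fibre over a closed point $p\in S$; since the image lies in $S$, a bound $\dim p_1^{-1}(p)\leqslant 2m-i$ yields $\dim W_{i,[n,n+2,m],l'',l',l}\leqslant 2m+2-i$. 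Decomposing $\O_{\xi_n}$, $\O_{\xi_{n+2}}$ and $\O_{\xi_m}$ into their parts at $p$ and on $U:=S\setminus\{p\}$ and using the compatible quotients $\O_{\xi_m}\to\O_{\xi_{n+2}}\to\O_{\xi_n}$, one obtains a map which is an inclusion on closed points
$$p_1^{-1}(p)\longrightarrow S_{p,i}^{[l'',l',l]}\times U^{[n-l'',n+2-l',m-l]}\,.$$

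Next I would bound the two factors. The nonemptiness condition $0\leqslant n-l''\leqslant n+2-l'$ forces $l'\leqslant l''+2$, so, since $l''\leqslant l'$, there are exactly three cases: $l'=l''$, $l'=l''+1$ and $l'=l''+2$. For the factor $U^{[n-l'',n+2-l',m-l]}$: when $l'=l''$ the indices are $(n-l'',n+2-l'',m-l)$ and the relevant nested Hilbert scheme is irreducible of dimension $2(m-l)$ by the hypothesis of the present lemma; when $l'=l''+1$ the indices are $(n-l'',n+1-l'',m-l)$, and the corresponding space is irreducible of dimension $2(m-l)$ by Theorem \ref{irreducible n,n+1,m}, or --- when two of the indices coincide or one vanishes --- by Theorem \ref{main-theorem}, \cite[Theorem 3.0.1]{Cheah98} or \cite[Corollary 7.3]{Fo}; when $l'=l''+2$ the first two indices coincide, so the space is the two-step scheme $S^{[n-l'',m-l]}$ (or $S^{[m-l]}$), irreducible of dimension $2(m-l)$ by Theorem \ref{main-theorem} or \cite[Theorem 2.4]{fo68}. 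In every case $\dim U^{[n-l'',n+2-l',m-l]}=2(m-l)$.

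For the factor $S_{p,i}^{[l'',l',l]}$ I would establish the uniform bound $\dim S_{p,i}^{[l'',l',l]}\leqslant 2l-i$. If $l'=l''$, then $\xi_n$ and $\xi_{n+2}$ agree at $p$, so $S_{p,i}^{[l'',l',l]}$ is identified with $S_{p,i}^{[l',l]}$, whose dimension is at most $2l-i$ by \eqref{dim S_p,i^l',l}. If $l'=l''+1$ or $l'=l''+2$, the natural map $S_{p,i}^{[l'',l',l]}\to S_{p,i}^{[l]}\times S_p^{[l'',l']}$ is an inclusion on closed points; by \eqref{e2} one has $\dim S_{p,i}^{[l]}\leqslant l+1-i$, while $\dim S_p^{[l'',l']}=l'-1$ --- for $l'-l''=1$ this is \cite[Corollary 5.9]{Bulois-Evain}, and for $l'-l''=2$ this is the value of $\dim S_p^{[l'-2,l']}$ computed in \cite{Bulois-Evain}. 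Hence $\dim S_{p,i}^{[l'',l',l]}\leqslant(l+1-i)+(l'-1)=l+l'-i\leqslant 2l-i$, using $l'\leqslant l$. Combining the two estimates, $\dim p_1^{-1}(p)\leqslant(2l-i)+2(m-l)=2m-i$, and therefore $\dim W_{i,[n,n+2,m],l'',l',l}\leqslant 2m+2-i$, proving the lemma.

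I expect the main obstacle --- indeed the only place where this argument differs from the proof of Lemma \ref{dimension W_i,n+1} --- to be the estimate $\dim S_p^{[l'',l']}\leqslant l'-1$ for $l'-l''=2$; as remarked in the introduction, the absence of analogous bounds for larger gaps is precisely what blocks the same method from settling $S^{[n_1,n_2,n_3]}$ for all triples. The remaining points needing a little care are the boundary cases $l''=0$ (where $\xi_n$, and possibly $\xi_{n+2}$, avoids $p$) and the coincidences among the indices of the auxiliary nested Hilbert schemes; these are handled, exactly as above, by the conventions $S^{[0,\bullet]}=S^{[\bullet]}$ together with the irreducibility statements already proved for $S^{[a,b]}$ and $S^{[a,a+1,b]}$ and the hypothesis of the present lemma for $S^{[a,a+2,b]}$.
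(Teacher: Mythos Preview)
Your proposal is correct and follows essentially the same approach as the paper: stratify $W_{i,[n,n+2,m]}$ by the triples $(l'',l',l)$, project to $S$, embed each fibre into $S_{p,i}^{[l'',l',l]}\times U^{[n-l'',n+2-l',m-l]}$, and bound the two factors case by case according to whether $l'-l''$ equals $0$, $1$, or $2$. The paper cites \cite[Corollary 7.5]{Bulois-Evain} for the key estimate $\dim S_p^{[l'-2,l']}=l'-1$ in the new gap-$2$ case, exactly as you anticipated; your treatment of the boundary/coincidence cases is slightly more explicit than the paper's but otherwise identical.
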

\begin{proof}
From \eqref{strat W_i_a,b,c}, we have,
$$W_{i,[n,n+2,m]}=\bigcup_{l'',l',l}W_{i,[n,n+2,m],l'',l',l}\,.$$
So it suffices to prove that 
$\dim (W_{i,[n,n+2,m],l'',l',l}) \leqslant 2m+2-i$ for $i \geqslant 2$.
Consider the projection $p_1 :  W_{i,[n,n+2,m],l'',l',l} \longrightarrow S$
which sends $(p,\xi_n,\xi_{n+2},\xi_m)$ to $p$.
We find an upper bound for the dimension of the fiber over a closed point $p \in S$.
Let $U$ be the open subset $S \setminus \{p\}$.
Given a point $(p,\xi_n,\xi_{n+2},\xi_m) \in p_1^{-1}(p)$,
the quotient $\O_{\xi_m}\longrightarrow \O_{\xi_{n+2}}$ gives rise to quotients 
$$\O_{\xi_m,p}\longrightarrow \O_{\xi_{n+2},p}\,,\qquad \left(\bigoplus_{q\in U} \O_{\xi_m,q}\right)\longrightarrow \left(\bigoplus_{q\in U} \O_{\xi_{n+2},q}\right)$$
and the quotient $\O_{\xi_{n+2}}\longrightarrow \O_{\xi_n}$
gives rise to quotients 
$$\O_{\xi_{n+2},p} \longrightarrow \O_{\xi_n,p}\,,\qquad \left(\bigoplus_{q\in U} \O_{\xi_{n+2},q}\right) \longrightarrow 
	\left(\bigoplus_{q\in U} \O_{\xi_n,q}\right)\,.$$
This gives rise to the following map which is an inclusion on closed points
\begin{equation*}
p_1^{-1} (p) \longrightarrow S_{p,i}^{[l'',l',l]} \times U^{[n-l'',n+2-l',m-l]}\,.
\end{equation*}
We note that $n+2-l'\geqslant n-l''$, that is,
$l'\leqslant l''+2$.
As $l''\leqslant l'$, there are only the following three possibilities: 
$l'=l''$ or $l'=l''+1$ or $l'=l''+2$. 

If $l''=l'$ then by our hypothesis 
$S^{[n-l'',n+2-l',m-l]}$ is irreducible of dimension $2(m-l)$.
If $l''=l'- 1$ then $S^{[n-l'',n+2-l',m-l]}$ is the same as $S^{[n-l'',n+1-l'',m-l]}$,
which is irreducible of dimension $2(m-l)$
by Theorem \ref{irreducible n,n+1,m}.
If $l''=l'-2$ then $S^{[n-l'',n+2-l',m-l]}$ is same as
$S^{[n+2-l',m-l]}$, 
which is irreducible of dimension $2(m-l)$
by Theorem \ref{main-theorem}.
So it follows that $\dim (U^{[n-l'',n+2-l',m-l]})=2(m-l)$.

Now we need to find an upper bound of $\dim (S_{p,i}^{[l'',l',l]})$.
We have three cases : $l''= l'-2$, $l'' = l'-1$ and $l''=l'$.
We first consider the cases $l''=l'-2$ or $l'-1$.
There is a natural map 
$$S_{p,i}^{[l'',l',l]} \longrightarrow S_{p,i}^{[l]} \times S_{p}^{[l'',l']}$$
which is an inclusion on closed points.
If $l''=l'-2$ then we use \cite[Corollary 7.5]{Bulois-Evain},
and if $l''=l'-1$ then we use \cite[Corollary 5.9]{Bulois-Evain},
to conclude $\dim(S^{[l'',l']}_p) = l'-1$.
Also from \eqref{e2}, we get 
$\dim(S_{p,i}^{[l]}) \leqslant l+1-i$.
So it follows that 
$$\dim S_{p,i}^{[l'',l',l]} \leqslant (l+1-i) + (l'-1) \leqslant 2l-i \,.$$
This gives 
$$\dim(p_1^{-1}(p)) \leqslant 2(m-l) + 2l-i=2m-i\,.$$
Thus we get  
$$\dim (W_{i,[n,n+2,m],l'',l',l}) \leqslant 2m+2-i\,.$$

Next we consider the case $l''=l'$.
In this case $S_{p,i}^{[l'',l',l]}$ is same as $S_{p,i}^{[l',l]}$ which 
has dimension at most $2l-i$ by \eqref{dim S_p,i^l',l}.
Thus again we get 
$$\dim(p_1^{-1}(p)) \leqslant 2(m-l) + 2l-i=2m-i$$
and hence
$$\dim (W_{i,[n,n+2,m],l'',l',l}) \leqslant 2m+2-i\,.$$
This proves the lemma.
\end{proof}

\begin{theorem}\label{irreducible n+2}
Let $n$ and $m$ be two positive integers such that $n+2 < m$.
Then $S^{[n,n+2,m,m+1]}$ and $S^{[n,n+2,m]}$ are irreducible.
\end{theorem}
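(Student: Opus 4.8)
The plan is to run the minimal counterexample argument of Theorems \ref{main-theorem} and \ref{irreducible n,n+1,m}, with Lemma \ref{dimension W_i,n+2} now playing the role that Lemma \ref{dim W_i} played there. Let $\mc A$ be the set of pairs of positive integers $(a,b)$ with $a+2<b$ and $S^{[a,a+2,b]}$ reducible, and suppose $\mc A\neq\emptyset$. The first point to dispose of is the base case $b=a+3$: applying Theorem \ref{main-theorem} to the pair $(a,a+2)$ (legitimate since $a<a+2$) shows $S^{[a,a+2,a+3]}$ is irreducible, so $(a,a+3)\notin\mc A$ for every $a\geqslant1$. Projecting $\mc A$ to its first coordinate, let $n$ be the least integer in the image, and among the $b$ with $(n,b)\in\mc A$ let $b_0$ be the least; then $b_0>n+2$ and $b_0\neq n+3$, so $b_0\geqslant n+4$ and $m:=b_0-1\geqslant n+3$. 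In particular $n+2<m$, the scheme $S^{[a,a+2,b]}$ is irreducible whenever $a<n$ and $a+2<b$, the scheme $S^{[n,n+2,b]}$ is irreducible for all $n+2<b\leqslant m$, and $S^{[n,n+2,m+1]}$ is reducible; moreover, as in the earlier proofs, an irreducible scheme of the form $S^{[a,a+2,b]}$ has dimension $2b$, as one sees from the open locus of flags whose largest member consists of $b$ distinct points mapping quasi-finitely onto $S^{[b]}$.

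The next step is to verify that the hypothesis of Lemma \ref{dimension W_i,n+2} is met for this $n$ and $m$. For a triple $(l'',l',l)$ as in that lemma one has $l'-l''\in\{0,1,2\}$. If $l'-l''=2$ then $S^{[n-l'',\,n+2-l',\,m-l]}$ collapses to $S^{[n+2-l',\,m-l]}$, irreducible of dimension $2(m-l)$ by Theorem \ref{main-theorem}. If $l'-l''=1$ it collapses to $S^{[n-l'',\,(n-l'')+1,\,m-l]}$, irreducible of dimension $2(m-l)$ by Theorem \ref{irreducible n,n+1,m} (or by \cite[Theorem 3.0.1]{Cheah98} when its last two indices agree). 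If $l'=l''$ it equals $S^{[n-l'',\,(n-l'')+2,\,m-l]}$; when $n-l''<n$ this is irreducible of dimension $2(m-l)$ by the first step, using \cite[Proposition 6]{GH04} in the collapsed case $m-l=(n-l'')+2$, while when $l''=0$ one has $n+2\leqslant m-l<m$ and the same conclusion follows from the first step, again with \cite[Proposition 6]{GH04} in the collapsed case $m-l=n+2$. Hence Lemma \ref{dimension W_i,n+2} applies and gives $\dim W_{i,[n,n+2,m]}\leqslant 2m+2-i$ for every $i\geqslant2$.

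The final step is the geometric argument of Theorem \ref{main-theorem}, now over the irreducible scheme $S\times S^{[n,n+2,m]}$. Choose a locally free $\mc E$ of rank $r+1$ surjecting onto $\tilde\I_m$ with locally free kernel $\mc F$ of rank $r$; then $\mb P(\tilde\I_m)$ is the zero locus, inside the irreducible space $\mb P(\mc E)$ of dimension $2m+2+r$, of a homomorphism cut out locally by $r$ equations, so every irreducible component of $\mb P(\tilde\I_m)$ has dimension at least $2m+2$. Since the fibre of $\varphi$ over a point of $W_{i,[n,n+2,m]}$ is a projective space of dimension $i-1$, the bound above gives $\dim\varphi^{-1}(W_{i,[n,n+2,m]})\leqslant 2m+1$ for $i\geqslant2$, so no component of $\mb P(\tilde\I_m)$ lies in any such locus, and every component meets $\varphi^{-1}(W_{1,[n,n+2,m]})$, which maps isomorphically onto the irreducible open subset $W_{1,[n,n+2,m]}$. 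Therefore $\mb P(\tilde\I_m)$ is irreducible, and since $\psi\colon\mb P(\tilde\I_m)\to S^{[n,n+2,m+1]}$ is surjective on closed points, $S^{[n,n+2,m+1]}$ is irreducible, contradicting the choice of $m$. Thus $\mc A=\emptyset$, so $S^{[n,n+2,m]}$ is irreducible for every $n+2<m$; rerunning the same case analysis and geometric argument for an arbitrary such pair (now that all the auxiliary nested schemes are known irreducible) together with $\mb P(\tilde\I_m)\cong S^{[n,n+2,m,m+1]}$ then shows $S^{[n,n+2,m,m+1]}$ is irreducible as well.

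The only genuinely new ingredient beyond Theorems \ref{main-theorem} and \ref{irreducible n,n+1,m} is Lemma \ref{dimension W_i,n+2}, whose proof rests on the Bulois--Evain computations of $\dim S_p^{[l-1,l]}$ and $\dim S_p^{[l-2,l]}$; given those, I expect the only delicate point in the present theorem to be the bookkeeping of the second step --- checking that every nested Hilbert scheme $S^{[n-l'',\,n+2-l',\,m-l]}$ that can occur is already known to be irreducible of the expected dimension, either through the minimal counterexample hypothesis or through Theorems \ref{main-theorem} and \ref{irreducible n,n+1,m} and the classical results on $S^{[k,k+1]}$ and $S^{[k,k+2]}$.
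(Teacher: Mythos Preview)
Your proof is correct and follows the same minimal-counterexample strategy as the paper, reducing to Lemma \ref{dimension W_i,n+2} and the geometric argument from Theorem \ref{main-theorem}; your verification of the hypotheses of Lemma \ref{dimension W_i,n+2} is more explicit than the paper's, which simply says ``a similar argument''. The one notable difference is in the base case $(a,a+3)$: you apply Theorem \ref{main-theorem} to the pair $(a,a+2)$ to obtain irreducibility of $S^{[a,a+2,a+3]}$ directly, whereas the paper takes the slightly less direct route of applying Theorem \ref{irreducible n,n+1,m} with $(n,m)=(a,a+2)$ to get $S^{[a,a+1,a+2,a+3]}$ irreducible and then projecting to $S^{[a,a+2,a+3]}$.
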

\begin{proof}
    We follow the same method as we used in proof of Theorem \ref{main-theorem}.
    Let $\mc A$ be the
	set of pairs of integers $(a,b)$ with $1\leqslant a$, $a+2<b$ 
	and $S^{[a,a+2,b]}$ reducible. 
	We prove that $\mc A$ is empty.
	Taking $(n,m)=(a,a+2)$ in Theorem \ref{irreducible n,n+1,m}
	shows that $S^{[a,a+1,a+2,a+3]}$ is irreducible and so it follows 
	that $S^{[a,a+2,a+3]}$ is irreducible. Thus, 
	it follows that
	for every $a \geqslant 1$ the pair $(a,a+3)\notin \mc A$. Consider the projection map 
	to the first coordinate $\mc A\longrightarrow \Z_{\geqslant 1}$. 
	Let $n$ be the smallest integer such in the image of this map. 
	Among the set of integers $b$ such that $(n,b)\in \mc A$,
	let $b_0$ be the smallest. 
	Clearly, $b_0>n+3$. Let $m=b_0-1$. 
	Then $m\geqslant n+3$. 
	We conclude that for all pairs of integers
	$(a,b)$ with $1\leqslant a$, $a+2<b$, if $a<n$ then $S^{[a,a+2,b]}$ is irreducible,
	and $S^{[n,n+2,b]}$ is irreducible if $b\leqslant m$. 
	Further $S^{[n,n+2,m+1]}$ is reducible.
	A similar argument as in the proof of Theorem \ref{main-theorem}, 
	after replacing Lemma \ref{dim W_i} with Lemma \ref{dimension W_i,n+2},
	concludes the proof of the Theorem. 
\end{proof}

\end{document}